\documentclass[12pt]{amsart}

\newif\ifdebug
\debugfalse

\newif \iffig
\figfalse

\newif \iftable
\tablefalse

\usepackage{sfilip_package_settings}
\usepackage{sfilip_abbreviations}
\usepackage{sfilip_thm_style_long}
\usepackage{verbatim}

\title[Universal affine bundles for compact complex manifolds]{Universal affine bundles for compact complex manifolds}

\hypersetup{
	pdftitle={Universal affine bundles for compact complex manifolds},	
	pdfauthor={Simion Filip, Valentino Tosatti},	
	pdfsubject={math},		
	pdfkeywords={MSC},	
	pdfnewwindow=true,		
	linkcolor=darkblue,		
	citecolor=darkred,		
	filecolor=darkblue,		
	urlcolor=darkblue,		
	pdfborder={0 0 0},
	breaklinks=true
}

\thanks{Revised \textsc{\today} }

\date{June 2026}
\dedicatory{To Fedya Bogomolov on the occasion of his 80th birthday.}

\author{
	Simion Filip
}
\address{
	\parbox{0.5\textwidth}{
		Department of Mathematics\\
		University of Chicago\\
		5734 S University Ave\\
		Chicago, IL 60637\\}
}
\email{{sfilip@math.uchicago.edu}}

\author{
	Valentino Tosatti
}
\address{
	\parbox{0.7\textwidth}{Courant Institute School of Mathematics, Computing, and Data Science\\
New York University\\
251 Mercer St\\
New York, NY 10012\\}
}
\email{{tosatti@cims.nyu.edu}}
\begin{document}

\begin{abstract}
	We construct a universal affine bundle $E$ over a compact complex manifold $X$ equipped with a probability measure $\mu$, which is affine over the vector space $H_{1,1}(X;\bR)$, and satisfies a number of natural properties.
	The bundle $E$ carries a natural action of the automorphism group of $(X,\mu)$, and provides potentials for all $dd^c$-closed $(1,1)$-forms on $X$.
	We relate our construction to lifts of the action of the automorphism group of a Calabi--Yau manifold to its universal torsor.
\end{abstract}


%
\maketitle
%
\noindent\hrulefill
\tableofcontents
\nointerlineskip
\noindent \hrulefill
\ifdebug
   \listoffixmes
\fi


\section{Introduction}
	\label{sec:introduction}

Associated to any algebraic variety $X$ is a natural torus $\bbT_X$ whose character group is the \Neron--Severi group of $X$.
Assuming $\Pic^0(X)=0$, a $\bbT_X$-torsor $\cU\to X$ is called \emph{universal} if, for any line bundle $L\to X$ there exists an isomorphism of $\bbG_m$-torsors $\cU\times_{[L]}\bbG_m\toisom L^\times$, where $[L]$ represents the character of $\bbT_X$ corresponding to $L$.

Universal torsors have been studied extensively, mainly in the context of Fano varieties and with applications to the (non-)existence and counting of rational points over number fields, see e.g. Colliot-Th\'{e}l\`ene and Sansuc \cite{Colliot-TheleneSansuc1976_Torseurs-sous-des-groupes-de-type-multiplicatif;-applications-a-letude,Colliot-TheleneSansuc1987_La-descente-sur-les-varietes-rationnelles.-II} or Hassett and Tschinkel \cite{HassettTschinkel2004_Universal-torsors-and-Cox-rings,HassettTschinkel2022_Torsors-and-stable-equivariant-birational-geometry}.
In this note, we develop some tools to study the case when $X$ is a Calabi--Yau manifold, equipped with an action of an infinite group $\Gamma$ of automorphisms.

Our main result is \autoref{thm:properties_of_universal_bundle} which constructs, for any compact complex manifold $X$ equipped with a probability measure $\mu$, a smooth bundle $E\to X$ which is affine over the vector space $H_{1,1}(X;\bR)$, and which has a number of natural properties described therein.
A key feature of $E$ is that it carries potentials for all $dd^c$-closed $(1,1)$-forms on $X$.
Additionally, the affine bundles $E$ behave naturally under morphisms of varieties and push-forward of measures, as described in \autoref{thm:functoriality_of_e}.

With this in mind, we expect the following conjecture to hold:

\begin{conjecture*}[Lift in Calabi--Yau case]
	\label{cjc:Lift-in-Calabi-Yau-case}
	Suppose that $X$ is a projective Calabi--Yau manifold (i.e. the canonical bundle is torsion, and assume $H^1(X,\cO_X)=0$), and $\cU$ is a universal torsor for $X$.
	
	The action of $\Aut(X)$ on $X$ lifts to an action on $\cU$, after possibly passing to a finite index subgroup.
	More generally, the same should hold for the group of birational automorphisms of $X$.
\end{conjecture*}

It is clear that if $\Aut(X)$ fixes a point, then the action lifts to the universal torsor.
So in a way, the conjecture says that the volume form on a Calabi--Yau manifold can serve as a substitute for a rational point.

In general, the action of a group $\Gamma$ on $X$ leads to the action of a $\bbT_X$-extension of $\Gamma$ on the universal torsor $\cU$, and the lift of the action is equivalent to the splitting of the short exact sequence:
\[
	1 \to \bbT_X \to \wtilde{\Gamma} \to \Gamma \to 1.
\]
Our main result towards the above Conjecture is \autoref{thm:reduction_to_unitary_part}, which states that if $\Gamma$ is a group of automorphisms of a compact complex manifold $X$ preserving a probability measure, then the above short exact sequence splits after quotienting by the compact subgroup of $\bbT_X(\bC)$.
This would suffice to establish the Conjecture if we knew that the associated cohomology class in $H^2(\Gamma,\bbT_X(\bC))$ is integral.
Note that \autoref{thm:properties_of_universal_bundle} and \autoref{thm:reduction_to_unitary_part} do not require $X$ to be Calabi--Yau, or even simply-connected.

We end with a few remarks and questions:
\begin{remark*}[]
	\label{remark:refinements}
	\leavevmode
	\begin{enumerate}
	\item Constructing a universal torsor $\cU$, or an affine bundle $E$ which provides potentials for all $(1,1)$-forms, can always be done by choosing an explicit basis of the \Neron--Severi group or $H^{1,1}(X)$.
	The difficulty is in extending the group action to the resulting space, and this is where our universal construction in \autoref{thm:properties_of_universal_bundle} provides a natural solution.

	\item Are there examples of varieties $X$ with actions of groups $\Gamma$  on $X$ where it is \emph{not} possible to lift the action to the universal torsor, even after passing to a finite index subgroup in $\Gamma$?
	
	In \autoref{ssec:calabi_yau_example_without_lifted_action} we provide some examples of Calabi--Yau manifolds with actions of finite groups $\Gamma$ which do not lift to the universal torsor, as well as some examples of group actions that do not preserve a measure but nonetheless lift to the universal torsor.

	\item In \cite{FT_canonical}, we constructed canonical currents on the boundary of the ample cone of a K3 surface $X$, which are equivariant under the action of $\Aut(X)$.
	The affine bundle $E$ that we construct in this paper carries potentials for these currents, that behave equivariantly under the action of $\Aut(X)$.

	\item
	It would be of interest to perform the above constructions in the situation when $\Pic^0(X)\neq 0$.
	In this case the universal torsor should be replaced by a universal semi-abelian variety.
	\end{enumerate}
\end{remark*}

\subsubsection*{Acknowledgements}
	\label{sssec:Acknowledgements}
It is our pleasure to dedicate this paper to Fedya Bogomolov on the occasion of his 80th birthday. His seminar work and extensive mathematical breadth are an inspiration for us.

We are also grateful to Robert Berman and Mihai P\u{a}un for helpful discussions regarding the construction of natural measures on \Kahler manifolds with effective canonical bundle.

This material is based upon work partially supported by the National Science Foundation under Grants No. DMS-2005470, DMS-2305394 (first-named author) and DMS-2404599 (second-named author).

\section{The universal affine bundle}
	\label{sec:the_universal_affine_bundle}


\subsection{Construction and properties}
	\label{ssec:construction_and_properties}

Let $X$ be a compact $n$-dimensional complex manifold.
\subsubsection{Cohomology and homology groups}
	\label{sssec:setup_construction_and_properties}
For $0\leq p\leq n$, define the real Bott-Chern cohomology
$H^{p,p}(X;\bR)$ as the quotient of $d$-closed real $(p,p)$-forms modulo $dd^c$-exact ones, and the real Aeppli cohomology $H^{p,p}_{\rm A}(X;\bR)$ of $dd^c$-closed real $(p,p)$-forms modulo those of the form $\partial\alpha+\ov{\partial\alpha}$ where $\alpha$ is any $(p-1,q)$-form. It is well-known (see e.g. \cite[\S 2.1]{Ang}) that these spaces are finite-dimensional, and that there is a perfect pairing
$$H^{p,p}(X;\bR)\otimes H^{n-p,n-p}_{\rm A}(X;\bR)\to\mathbb{R},$$
given by cup product and integration, which thus identifies $H^{n-p,n-p}_{\rm A}(X;\bR)$ with $H^{p,p}(X;\bR)^\dual$.

\subsubsection{Currents}
	\label{sssec:Currents}

Let $\cA_{k,k}(X)$ denote the space of currents dual to smooth $(k,k)$-forms on $X$, or equivalently ``generalized $(n-k,n-k)$-forms''.
The operators $dd^c$ extends as a map $dd^c\colon \cA_{k+1,k+1}(X)\to \cA_{k,k}(X)$, and similarly for $d,\partial,\ov{\partial}$. We can then define the analogous spaces of Bott-Chern and Aeppli cohomologies $H_{p,p}(X;\bR)$ and $H^{\rm A}_{p,p}(X;\bR)$ using $(p,p)$-currents instead of smooth forms. As proved in \cite[Prop.2.2]{Big} (see also \cite[\S 2.1]{Ang}), these satisfy
$$H_{p,p}(X;\bR)\cong H^{n-p,n-p}(X;\bR),\quad H^{\rm A}_{p,p}(X;\bR)\cong H^{n-p,n-p}_{\rm A}(X;\bR),$$
where the isomorphism is induced by the natural inclusion of $(p,p)$-forms into $(n-p,n-p)$-currents.

We therefore set
\[
	V:=H_{1,1}^{\rm A}(X;\bR)\text{ and }V^\dual:=H^{1,1}(X;\bR).
\]
Of course, when $X$ is K\"ahler, both the Bott-Chern and Aeppli cohomology are isomorphic to the usual Dolbeault cohomology (of forms or currents).

Let $\cZ^{p,p}$ denote the sheaf of real, closed $(p,p)$-forms on $X$, so for any open subset $U\subset X$ we write $\cZ^{p,p}(U)$ for the smooth closed $(1,1)$-forms on $U$. Similarly, $\cZ^{p,p}_{dd^c}$ denotes the sheaf of real, $dd^c$-closed $(p,p)$-forms.

For $\alpha\in \cZ^{1,1}(X)$ denote by $[\alpha]$ its cohomology class, which is an element of $V^\dual$ by definition.

\begin{definition}[Affine bundle]
	\label{def:affine_bundle}
	Suppose that $E\to X$ is a smooth bundle with fibers diffeomorphic to a fixed vector space $V$.
	It will be called an \emph{affine bundle} over the vector space $V$ if it is equipped with a smooth map
	\[
		V\times E\to E
	\]
	which induces a simply-transitive action of $V$ on the fibers of $E$.
	The map will be denoted by $(v,e)\mapsto v+e$.
\end{definition}

\begin{remark}[Properties of affine bundles]
	\label{rmk:properties_of_affine_bundles}
	\leavevmode
	\begin{enumerate}
		\item Given two sections $s_1,s_2$ of an affine bundle $E$ defined on the open set $U\subset X$, their difference can be viewed as a function $(s_1-s_2)\colon U\to V$.
		\item Given a linear subspace $V'\subset V$ we can form the quotient space $E/V'$ which is an affine bundle over $V/V'$.
	\end{enumerate}
\end{remark}

\begin{definition}[Homogeneous functions]
	\label{def:homogeneous_functions}
	Suppose $E\to X$ is an affine bundle over the vector space $V$.
	Given a linear functional $[\alpha]\in V^\dual$, a function
	\[
		\phi \colon E\to \bR
	\]
	will be called \emph{$[\alpha]$-homogeneous} if
	\[
		\phi(x+v) = \phi(x)+\ip{[\alpha],v} \quad \forall v\in V
	\]
	where addition denotes the action of $V$ on $E$.
\end{definition}
Note that $0$-homogeneous functions are constant on fibers.

\subsubsection{Affine functions on affine bundles}
	\label{sssec:affine_functions_on_affine_bundles}
For later use, we record the following construction.
For an affine bundle $E\to X$, let $Lin(E)\to X$ denote the vector bundle of functions on the total space of $E$, which are affine on the fibers.
Let also $\bR_X$ denote the trivial vector bundle over $X$ with fiber $\bR$.
The rank of $Lin(E)$ is $\dim V+1$ and it sits in an exact sequence of vector bundles
\begin{align}
	\label{eqn:affie_functions_bdl_exact_seqn}
	0\to \bR_X \into Lin(E) \onto X\times V^\dual \to 0
\end{align}
where $\bR_X$ and $X \times V^\dual$ are trivial vector bundles.
The first inclusion is that of constant functions, which are clearly affine, and the quotient can be naturally identified with linear functions on $V$.

\subsubsection{Definition of the universal affine bundle}
	\label{sssec:definition_of_the_universal_affine_bundle}
When convenient, we will use the abbreviation
\[\cB_{1,1}(X)=\left\lbrace \beta\in\cA_{1,1}(X)\ |\ \beta=\partial\ov{S}+\overline{\partial} S, S\in\mathcal{A}_{1,2}(X)\right\rbrace,\]
for the space of $(1,1)$-currents of the form $\partial\ov{S}+\overline{\partial} S$, or equivalently of the currents which are $(1,1)$ components of $d$-exact real $2$-currents.
We will also denote by $P:\cA_{1,1}(X)\to\cA_{1,1}(X)/\cB_{1,1}(X)$ the projection onto the quotient (infinite-dimensional) vector space.

Fix a probability measure $\mu\in\cA_{0,0}(X)$. 
For each $x\in X$ we also have the Dirac-delta mass $\delta_x\in \cA_{0,0}(X)$.

Define now
\begin{align}
	\label{eqn:E_x_definition}
	\begin{split}
	E(x) & = P\left(\left\lbrace \beta \in \cA_{1,1}(X) \colon dd^c\beta = \delta_x - \mu \right\rbrace\right)\\ 
	& \text{and total space}\\
	E & = \bigcup_{x\in X} E(x) \xrightarrow{p}X,
	\end{split}
\end{align}
and observe that $E(x)$ has naturally the structure of an affine linear space, whose underlying vector space is finite-dimensional and isomorphic to $V$.

\begin{theorem}[Properties of universal bundle]
	\label{thm:properties_of_universal_bundle}
	Let $X$ be a compact complex manifold equipped with a probability measure $\mu$.
	The collection of fibers defined by \autoref{eqn:E_x_definition} has the structure of a smooth bundle, affine over the vector space $V=H_{1,1}(X)$ and with the following properties:
	\begin{description}
		\item[Pluriharmonic sections] There exists a subsheaf $\cP_E$ of the sheaf of smooth sections of $E$, such that if $s_1,s_2\in \cP_E(U)$ and $[\alpha]\in V^\dual$ then
		\[
			dd^c \ip{[\alpha],s_1-s_2} =0
		\]
		where $s_1-s_2$ is viewed as a $V$-valued function on $U$ and the pairing $\ip{[\alpha],s_1-s_2}$ is a smooth real-valued function on $U$.
		\item[Canonical potentials] There exists a continuous linear map of Fr\'echet spaces
		\[
			\Phi\colon \cZ^{1,1}(X)\to C^{\infty}(X;Lin(E))\subset C^\infty(E;\bR)
		\]
		such that for all $\alpha\in \cZ^{1,1}(X)$ the function $\Phi(\alpha)$ is $[\alpha]$-homogeneous.
		Furthermore, for any pluriharmonic local section $s\in \cP_E(U)$ we have that
		\begin{equation}\label{ddc}
			dd^c\left( s^*\Phi{(\alpha)} \right) = \alpha\vert_U.
		\end{equation}
        \item[Normalization] If $\alpha\in \cZ^{1,1}(X)$ satisfies $\alpha=dd^c\phi$ with $\int_X \phi\,\mathrm{d}\mu=0$ then $\Phi(\alpha)=p^{*}\phi$.
        \item[Universality for metrized line bundles] Suppose $L\to X$ is a holomorphic line bundle, with first Chern class $c_1(L)\neq 0$ in $H^{1,1}(X)$, and with metric $e^{\phi}$ viewed as $\phi\colon L^\times \to \bR$.
        Let also $\omega_\phi\in \cZ^{1,1}(X)$ denote its curvature form, normalized such that $c_1(L)=[\omega_\phi]$.
        Then there exists a unique ``classifying map''
        \[
        	cl_{\phi}\colon L^\times \to E/c_1(L)^{\perp}
        \]
        such that $cl^{*}_{\phi}\Phi\left(\omega_{\phi}\right)=\phi$.

		Furthermore, for another metric $\psi$ on $L$, the corresponding classifying map $cl_{\psi}$ is related to $cl_{\phi}$ by a global translation determined by the relation:
		\[
			\ip{c_1(L),(cl_{\psi} - cl_{\phi})} = \int_X (\psi - \phi) \, \mathrm{d}\mu
		\]
		where $\psi-\phi$ descends to a function on $X$.
	\end{description}
\end{theorem}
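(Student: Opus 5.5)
Fix once and for all a smooth family of solutions: choose a current $\beta_0$ with $dd^c\beta_0=\delta_{x_0}-\mu$ for a base point, and more usefully, for $x$ varying in a coordinate ball $U$, write $\delta_x-\mu=(\delta_x-\delta_{x_0})+(\delta_{x_0}-\mu)$. The key local input is a family $\gamma_x\in\cA_{1,1}(X)$, depending smoothly on $x\in U$ (in the sense that $x\mapsto\ip{\gamma_x,\alpha}$ is smooth for every smooth test form $\alpha$), with $dd^c\gamma_x=\delta_x-\delta_{x_0}$. I would build it from the local fundamental solution: take the Bochner--Martinelli/Green kernel $G(z-x)$ of $\Delta$ in the ball, so that $dd^c\bigl(G(z-x)\,\omega_0^{n-1}/c_n\bigr)$ equals $\delta_x$ plus a smooth error, set $\gamma_x=\chi\,(G(\cdot-x)-G(\cdot-x_0))\omega_0^{n-1}/c_n$ with a cutoff $\chi$, and correct the remaining smooth $dd^c$-exact-in-Aeppli-sense error (which has total mass zero, hence pairs to zero with constants, the cokernel of $dd^c$ on $(1,1)$-currents by the duality $H_{0,0}\cong H^{n,n}$ recalled above) by solving $dd^c\eta_x=\text{error}_x$ with smooth dependence, using the finite-dimensionality and Hodge-type decomposition for the elliptic Bott--Chern/Aeppli complexes from \cite{Ang,Big}. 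Then $s_U(x):=P(\beta_0+\gamma_x)$ is a section of $E$ over $U$, and any other element of $E(x)$ differs by $P$ of a $dd^c$-closed $(1,1)$-current, i.e. an element of $V=H^{\rm A}_{1,1}$; this gives the simply transitive $V$-action and the local trivializations $U\times V\to E$, $(x,v)\mapsto s_U(x)+v$. Transition maps $s_U-s_{U'}$ are $V$-valued functions whose pairings with $[\alpha]\in V^\dual$ are $\ip{\gamma_x-\gamma'_x,\alpha}$, smooth in $x$, so $E$ is a smooth affine bundle.

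The canonical potentials: for $\alpha\in\cZ^{1,1}(X)$ and $e=P(\beta)\in E(x)$ define $\Phi(\alpha)(e)=\ip{\beta,\alpha}$ (pairing current with smooth form). This is well defined since $\ip{\partial\ov S+\ov\partial S,\alpha}=0$ for $d$-closed $\alpha$, it is $[\alpha]$-homogeneous by construction of the pairing $V\otimes V^\dual\to\bR$, linear and continuous in $\alpha$, and affine on fibers, hence a section of $Lin(E)$. Normalization: if $\alpha=dd^c\phi$ then $\ip{\beta,dd^c\phi}=\ip{dd^c\beta,\phi}=\phi(x)-\int\phi\,d\mu=\phi(x)$. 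Define $\cP_E(U)$ to be those smooth sections $s$ such that, locally, $s=P(\beta_0+\gamma_x+h(x))$ where $h$ has pluriharmonic pairings; more intrinsically, $s\in\cP_E(U)$ iff $dd^c_x\,s^*\Phi(\alpha)=\alpha|_U$ for all $\alpha$. The main computation is that the sections $s_U$ above satisfy this: $s_U^*\Phi(\alpha)(x)=\ip{\beta_0,\alpha}+\ip{\gamma_x,\alpha}$, and applying $dd^c_x$ and using that $G(z-x)$ is, as a function of $x$, the same kernel, one gets $dd^c_x\ip{\gamma_x,\alpha}=\alpha(x)$ up to the correction term $\eta_x$, whose contribution must be shown pluriharmonic in $x$ — this is the step I expect to be the main obstacle, and I would handle it by choosing $\eta_x$ via a fixed linear solution operator applied to an error which is itself a $dd^c_x$-closed expression in $x$ (cleanest: define $\cP_E$ as above intrinsically and verify nonemptiness locally by the Green kernel, adjusting by a $V$-valued function to kill any non-pluriharmonic part, which is possible since $dd^c$ of the defect is a $V$-valued closed form that is locally $dd^c$-exact). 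The pluriharmonic-difference property then follows immediately: $dd^c\ip{[\alpha],s_1-s_2}=dd^c(s_1^*\Phi(\alpha)-s_2^*\Phi(\alpha))=\alpha-\alpha=0$.

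Universality: given $(L,\phi)$, over $U$ pick a local holomorphic frame $\sigma$ and $s\in\cP_E(U)$; the function $u=\phi(\sigma)-s^*\Phi(\omega_\phi)$ is pluriharmonic on $U$ (both have $dd^c=\omega_\phi$). Since $c_1(L)\ne0$ there is $v\in V$ with $\ip{c_1(L),v}=1$, and we set $cl_\phi(\lambda\sigma(x))=s(x)+(u(x)+\log|\lambda|^2\text{-term})v$ mod $c_1(L)^\perp$; homogeneity makes $cl_\phi^*\Phi(\omega_\phi)=\phi$, and uniqueness holds because $E/c_1(L)^\perp$ is a line bundle on which $\Phi(\omega_\phi)$ is an affine coordinate. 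Independence of choices of $\sigma,s$ and gluing follow from the same uniqueness. For another metric $\psi=\phi+f$, $\omega_\psi=\omega_\phi+dd^cf$, so by Normalization $\Phi(\omega_\psi)=\Phi(\omega_\phi)+p^*f-\int f\,d\mu$; comparing $cl_\psi^*\Phi(\omega_\psi)=\psi$ with $cl_\phi^*$ gives the translation $\ip{c_1(L),cl_\psi-cl_\phi}=\int_X(\psi-\phi)\,d\mu$.
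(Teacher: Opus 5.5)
Your argument is correct in substance. Once you adopt your fallback for pluriharmonic sections, it coincides with the paper on the canonical potentials, the normalization, $\Phi$-based pluriharmonicity, and the classifying map. The real difference is how you make $E$ a smooth bundle.

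\textbf{Smooth structure.} The paper fixes a Gauduchon metric $\omega$ and uses one global Green section $s_\omega(x)=u_x\omega^{n-1}$. Its smoothness comes from identifying $x\mapsto\int_X u_x\,\omega^{n-1}\wedge\alpha$ with the solution $\psi$ of $\frac{1}{n}\Delta_\omega\psi=f$. You instead glue local sections $P(\beta_0+\gamma_x)$ built from the flat fundamental solution, a cutoff and a correction $\eta_x$. Your route gets points of $E(x)$ for an arbitrary measure $\mu$ directly from $H_{0,0}\cong H^{n,n}\cong\bR$, rather than from the paper's approximation of $\mu$ by smooth measures. It is not more elementary, though: choosing $\eta_x$ smoothly in $x$ already requires a global elliptic solve. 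At that point the global Green section is simpler, and it also shows that $E$ is topologically trivial.

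\textbf{Pluriharmonic sections.} Your first idea fails for $n\ge 2$, and not only because of $\eta_x$. The kernel $|z-x|^{2-2n}$ is harmonic but not pluriharmonic in $x$, so the cutoff terms already spoil $dd^c_x\ip{\gamma_x,\alpha}=\alpha(x)$. The fallback is therefore necessary, and it is exactly the paper's section $s_U=s_\omega+\sum_i f_i\,\alpha_i^\dual$. Your intrinsic description of the sheaf agrees with the paper's.

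You should also justify, rather than assert, that the defect $dd^c(s^*\Phi(\alpha))-\alpha$ is $V$-valued, i.e.\ depends only on $[\alpha]$. Without this, a correction by a $V$-valued function could not remove it. It follows from the identity
\[
\Phi(dd^c\phi)(e)=\phi(x)-\int_X\phi\,\mathrm{d}\mu \quad\text{for } e\in E(x),
\]
which holds at every point of $E$. Hence the defect vanishes on $dd^c$-exact forms.

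\textbf{Classifying map.} The local frame and pluriharmonic section you use are superfluous. The value $cl_\phi(z)$ is simply the unique point of the affine line in $E/c_1(L)^\perp$ over the image of $z$ in $X$ at which $\Phi(\omega_\phi)$ equals $\phi(z)$. This is how the paper argues, and it gives existence and uniqueness at once.
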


\begin{remark}[On properties of the universal bundle]
	\label{rmk:on_properties_of_the_universal_bundle}
	\leavevmode
	\begin{enumerate}
		\item Given $\alpha\in \cZ^{1,1}(X)$ the function $\Phi(\alpha)$ is $[\alpha]$-homogeneous, in particular it is invariant under $[\alpha]^\perp$, the subspace of homology classes that pair trivially against $[\alpha]$.
		In particular, $\Phi(\alpha)$ can be descended to the quotient $E/[\alpha]^\perp$.
		\item A consequence of the normalization property is that if $\alpha_1,\alpha_2\in \cZ^{1,1}(X)$ satisfy $\alpha_1=\alpha_2+dd^c \phi$ for some $\phi\in C^\infty(X,\mathbb{R})$, normalized by $\int_X \phi \,\mathrm{d}\mu=0$, then
		\[
			\Phi(\alpha_1) = \Phi(\alpha_2) + p^* \phi.
		\]
		\item When convenient (and not causing confusion) we will also write $\Phi(\alpha)=\phi_\alpha$.
		This function will be called ``the canonical potential of $\alpha$''.
	\end{enumerate}
\end{remark}

We now turn to the functoriality properties of $E$.
For a holomorphic map $X\xrightarrow{f}Y$ let $f^*\colon H^{1,1}(Y)\to H^{1,1}(X)$ and $f_*\colon H_{1,1}(X)\to H_{1,1}(Y)$ be the corresponding pullback and pushforward maps.
On the spaces of currents and differential forms the maps will be denoted in the same way.

\begin{theorem}[Functoriality of the affine bundle]
	\label{thm:functoriality_of_e}
	The bundle $E$ given by \autoref{eqn:E_x_definition} is functorial in the following sense.

	Suppose that $X\xrightarrow{f} Y $ is a holomorphic map between compact complex manifolds equipped with  probability measures $\mu_X,\mu_Y$, such that $f_*\mu_X = \mu_Y$.
	Then there is an induced smooth map $E_X \xrightarrow{f_E} E_Y$
	covering $f$, with the following properties.
	\begin{description}
		\item[Affine map] The map $f_E$ is a map of affine bundles, compatible with the map $f_*$ on $H_{1,1}$-spaces.
		Specifically
		\[
			f_E(e + v) = f_E(e) + f_*(v)\quad \forall e\in E_X, \forall v\in H_{1,1}(X).
		\]
		\item[Pullback of canonical potentials] For $\alpha\in \cZ^{1,1}(Y)$ we have that
		\[
			\Phi_X(f^*\alpha) = f_E^*\Big( \Phi_Y(\alpha) \Big).
		\]
	\end{description}
\end{theorem}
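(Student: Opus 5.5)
The map will be defined by pushing forward currents, and both properties will come from the facts that $f_*$ commutes with $\partial,\ov\partial,d,d^c$ and is adjoint to $f^*$ on forms. For $x\in X$ and a current $\beta\in\cA_{1,1}(X)$ with $dd^c\beta=\delta_x-\mu_X$, the push-forward $f_*\beta\in\cA_{1,1}(Y)$ is well defined, being dual to pullback of smooth $(1,1)$-forms; $f$ is proper since $X$ is compact. It satisfies
\[
dd^c f_*\beta=f_*(\delta_x-\mu_X)=\delta_{f(x)}-\mu_Y,
\]
using $f_*\delta_x=\delta_{f(x)}$ and the hypothesis $f_*\mu_X=\mu_Y$. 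Also $f_*(\partial\ov S+\ov\partial S)=\partial\ov{f_*S}+\ov\partial f_*S$, so $f_*\cB_{1,1}(X)\subset\cB_{1,1}(Y)$ and $f_*$ descends through $P$. I therefore set $f_E(P\beta)=P(f_*\beta)\in E_Y(f(x))$, which covers $f$.

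\textbf{Affine map.} The vector space acting on $E_X(x)$ is $\{dd^c$-closed $(1,1)$-currents$\}/\cB_{1,1}(X)=H^{\rm A}_{1,1}(X;\bR)$. Since $f_*$ is linear, $f_E(e+v)=f_E(e)+f_*v$, where $f_*$ is the induced map on Aeppli homology, which is the map in the statement.

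\textbf{Smoothness.} This depends on how the smooth structure in \autoref{thm:properties_of_universal_bundle} was built. I expect it comes from a smoothly varying family of solutions $\beta_x$ of $dd^c\beta_x=\delta_x-\mu$, for instance $\beta_x=G(x,\cdot)$ obtained from a Green-type kernel with a locally smooth choice in $x$, giving local trivializations $E\cong U\times V$. In such a chart, $f_E(x,v)=P(f_*\beta_x)+f_*v$. To write this in a trivialization of $E_Y$ near $f(x)$, subtract the reference section $\beta^Y_{f(x)}$. The difference $f_*\beta_x-\beta^Y_{f(x)}$ is a $dd^c$-closed current, and its Aeppli class is computed by pairing against a basis of closed smooth $(1,1)$-forms $\alpha_j$ on $Y$. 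This gives $\langle f^*\alpha_j,\beta_x\rangle-\langle\alpha_j,\beta^Y_{f(x)}\rangle$, and both terms depend smoothly on $x$ because the reference families are smooth in the parameter. I expect this to be the main technical point. The pairing of a fixed smooth form with $\beta_x$ is smooth in $x$ precisely because $\beta_x$ is a smooth family of currents, so I will make sure the trivializations are set up so that this holds.

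\textbf{Pullback of potentials.} Again this depends on the formula for $\Phi$, which I expect to be $\Phi(\alpha)(P\beta)=\langle\alpha,\beta\rangle$. This pairing is well defined on the quotient because $\alpha$ is closed: $\langle\alpha,\partial\ov S+\ov\partial S\rangle=0$. With this formula,
\[
\Phi_Y(\alpha)(f_E(P\beta))=\langle\alpha,f_*\beta\rangle=\langle f^*\alpha,\beta\rangle=\Phi_X(f^*\alpha)(P\beta),
\]
which is the claimed identity. If instead $\Phi$ includes a normalizing correction, for example pairing $\alpha$ with $\beta$ after choosing $\beta$ so that the $\mu$-average normalization of \autoref{thm:properties_of_universal_bundle} holds, I will check that the correction terms are built from $\mu$ alone and so transform compatibly under $f_*\mu_X=\mu_Y$. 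As a cross-check, consider the case $\alpha=dd^c\phi$ with $\int\phi\,d\mu_Y=0$. Then $\int f^*\phi\,d\mu_X=0$, and the normalization property gives $\Phi_X(f^*\alpha)=p^*f^*\phi=f_E^*p^*\phi$, consistent with the identity.
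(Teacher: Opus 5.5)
Your proposal is correct and follows the paper's proof. The fiberwise map is $P\beta\mapsto P(f_*\beta)$, which is well defined because $f_*$ commutes with $\partial,\ov\partial,dd^c$ and $f_*\mu_X=\mu_Y$. The potential identity is the adjunction $\langle f_*\beta,\alpha\rangle=\langle\beta,f^*\alpha\rangle$, and the paper does define $\Phi(\alpha)(P\beta)=\langle\beta,\alpha\rangle$, exactly as you guessed.

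For smoothness, the paper instead pulls back the local coordinates $\Phi_Y(\alpha_i)$ and $\Phi_Y(dd^c\phi_j)$ on $E_Y$ via the potential identity. Your trivialization computation rests on the same fact: the paper calls a section $s$ smooth precisely when $s^*\Phi(\alpha)$ is smooth for every closed smooth $(1,1)$-form $\alpha$. It then shows the Green section is smooth in this sense and uses it to trivialize $E$, which is what your "smooth family $\beta_x$" needs.
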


\begin{remark}[On naturality]
	\label{rmk:on_naturality}
	We will be interested in the group $\Aut(X,\mu)$ acting on $X$ via biholomorphic maps and preserving the probability measure $\mu$.
	The explicit naturality properties of our constructions under $\Aut(X,\mu)$ are:
	\begin{description}
		\item[Action on bundle] The group $\Aut(X,\mu)$ acts on $E$ on the left, by ``pushforward'', i.e.
		\[
			\gamma_1(\gamma_2(e)) = (\gamma_1\gamma_2)(e) \quad\forall \gamma_i\in \Aut(X,\mu), \forall e\in E
		\]
		\item[Action on potentials] For $\alpha\in \cZ^{1,1}(X)$ and $\phi_{\alpha}\colon E\to \bR$ constructed in \autoref{thm:properties_of_universal_bundle}, we have
		\[
			\phi_{\gamma^*\alpha} = \gamma^* \phi_\alpha
		\]
		where $\phi_{\gamma^*\alpha}$ is the canonical potential for $\gamma^*\alpha\in \cZ^{1,1}(X)$.
		\item[Pluriharmonic sections] The action of $\Aut(X,\mu)$ preserves the subsheaf of pluriharmonic sections. Indeed, this is a simple consequence of the identity
\[\langle [f^*\alpha],s_1-s_2\rangle = \langle[\alpha], f_*(s_1-s_2)\rangle.\]
	\end{description}
	Note in particular that the action of $\Aut(X,\mu)$ on $E$ is on the left, by pushforward, and on potentials and $(1,1)$-forms on the right, by pullback.

	Note that if $X,Y$ are complex manifolds equipped with $\mu_X$ and $\mu_Y$ which are smooth and strictly positive volume forms, then the condition $f_*\mu_X=\mu_Y$ implies that $f$ is surjective and $\dim Y\leq \dim X$.
\end{remark}



\subsection{Proofs of the properties}
	\label{ssec:proofs_of_the_properties}

We can now proceed to establish the results stated in \autoref{ssec:construction_and_properties}.

\subsubsection{Basic observations}
	\label{sssec:basic_observations_construction_bundle}
The fibers $E(x)$ defined in \autoref{thm:properties_of_universal_bundle} have a natural structure of affine space over $H_{1,1}(X;\bR)$, since the difference of any (equivalence classes) of currents $[\beta],[\beta']\in E(x)$ gives a unique element $[\beta-\beta']\in H_{1,1}(X;\bR)=:V$.

\subsubsection{Construction of canonical potential}
	\label{sssec:construction_of_canonical_potential}
Given any smooth $\alpha \in \cZ^{1,1}(X;\bR)$ the function $\Phi(\alpha)\colon E\to \bR$ is tautologically defined by
\[
	\Phi(\alpha)(\beta):=\ip{\beta,\alpha} \text{ for }\beta\text{ a representative in }E(x).
\]
Note that the value is well-defined independently of the representative $\beta$ since modifying it by $\beta\mapsto \beta'=\beta + \partial\ov{S}+\ov{\partial}S$ (with $S\in \cA_{1,2}(X)$) does not affect the value, since
\[
	\ip{\partial\ov{S}+\ov{\partial}S, \alpha} = \ip{(d(S+\ov{S})^{(1,1)},\alpha}= \ip{d(S+\ov{S}),\alpha}=\ip{S+\ov{S},d\alpha}=0,
\]
since $\alpha\in \cZ^{1,1}(X;\bR).$
\begin{definition}[Smooth sections of $E$]
	\label{def:smooth_sections_of_e}
	For an open set $U\subset X$, a section $s\colon U\to E\vert_U$ will be called \emph{smooth} if, for any smooth $\alpha\in\cZ^{1,1}(X)$ we have that $s^*\Phi(\alpha)$ is a smooth function on $U$.
\end{definition}
It follows from the constructions that the difference of two smooth sections is a smooth function $(s-s')\colon U\to V$, and that smooth sections form a sheaf of affine spaces.

\autoref{prop:properties_of_the_green_section} shows that there exist smooth sections, which are furthermore global sections of $E\to X$.
Given one such smooth global section $s\colon X\to E$, define a smooth structure on $E$ using the trivialization
\[
	X\times V\xrightarrow{s(x)+v}E
\]
which in particular shows the affine bundle is topologically trivial.

\subsubsection{Green section}
	\label{sssec:green_section}
By a classical result of Gauduchon \cite{Gau}, we can find a Hermitian metric $\omega$ on $X$ with $dd^c(\omega^{n-1})=0$ (for example, if $X$ is K\"ahler then every K\"ahler metric satisfies this), and by scaling we may assume that $\dVol:=\omega^n$ is a probability measure.

\begin{proposition}[Existence of Green functions]\label{greensect}
Given any point $x\in X$ there is a unique ``Green's function'' $u_x\in \mathcal{A}_{n,n}(X)$ (i.e. a distribution on $X$ satisfying $\int_X u_x\dVol=0$ and
\begin{equation}\label{green}
	dd^c\left(u_x \omega^{n-1}\right) = \delta_x - \mu.
\end{equation}
\end{proposition}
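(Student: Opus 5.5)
The plan is to reduce \autoref{green} to a scalar linear elliptic equation for the distribution $u_x$. For a distribution $u$, write
\[
	L(u)\,\dVol := dd^c\left(u\,\omega^{n-1}\right),
\]
where $L$ is the second-order operator
\[
	L(u)=\Delta_\omega u+\ip{\text{first-order terms involving } d(\omega^{n-1})}+u\cdot\frac{dd^c\omega^{n-1}}{\omega^n}.
\]
The Gauduchon condition $dd^c(\omega^{n-1})=0$ kills the zeroth-order term. So $L$ is a second-order elliptic operator with no constant term and principal symbol that of the Laplacian; up to a positive constant it is the formal adjoint of the complex Laplacian $\Delta^c u=\mathrm{tr}_\omega dd^c u$ with respect to $\dVol$. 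Indeed, integrating by parts twice,
\[
	\int_X f\, dd^c(u\,\omega^{n-1})=\int_X u\, dd^c f\wedge\omega^{n-1}.
\]
The equation to solve is therefore $L^*$-type: find $u$ with $\int_X u\, (dd^cf\wedge\omega^{n-1})=f(x)-\int_X f\,d\mu$ for all smooth $f$.

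I would then apply Fredholm theory on the compact manifold. The operator $P(f):=dd^cf\wedge\omega^{n-1}/\omega^n$ is elliptic with no zeroth-order term. By the strong maximum principle (E.~Hopf), its kernel consists exactly of the constants. By the Fredholm alternative, since the index of an elliptic operator on a compact manifold with the same symbol as its adjoint is zero, the cokernel of $P$ (equivalently the kernel of its adjoint $P^*=L$ acting on distributions) is one-dimensional. Because $\int_X (Pf)\,\dVol=\int_X dd^cf\wedge\omega^{n-1}=\int_X f\, dd^c\omega^{n-1}=0$, this kernel is spanned by the constant $1$ (here using the normalization $\omega^n=\dVol$). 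Hence $L(u)=g$ is solvable in distributions if and only if $g$ annihilates $\ker P=\bR$, i.e. $\int g=0$. The right-hand side $\delta_x-\mu$ has total mass $0$ because $\mu$ is a probability measure. So a distributional solution exists, for instance in a negative Sobolev space $H^{-s}$ with $s>n$, using elliptic regularity and the fact that $\delta_x\in H^{-s}$ for $s>n$ (real dimension $2n$).

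For uniqueness, two solutions differ by an element of $\ker L$. This kernel consists of smooth functions by elliptic regularity and is one-dimensional, equal to the cokernel of $P$, which we saw consists of the constants. The normalization $\int_X u_x\dVol=0$ then fixes $u_x$ uniquely.

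The main obstacle is the careful identification of $\ker L$ with the constants, where the Gauduchon condition enters; this requires exhibiting that $1$ lies in $\ker L$, which is exactly the statement $dd^c\omega^{n-1}=0$. A subtle point I will check is that $L$ preserves the space of distributions and is the genuine adjoint, with the sign conventions for $dd^c$ on currents, so that existence in $\mathcal{A}_{n,n}(X)$ follows from the closed-range property of $P$.
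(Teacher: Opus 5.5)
Your argument is correct, but it is organized differently from the paper's proof.

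\textbf{The paper's route.} The paper takes the ellipticity of $u\mapsto dd^c(u\,\omega^{n-1})$, and the fact that its kernel is the constants, from Gauduchon. It then quotes from the literature an $L^1$ Green's function $G_x$ with $dd^c(G_x\omega^{n-1})=\delta_x-\omega^n$. A general probability measure $\mu$ is reached by approximation:
\begin{enumerate}
\item it chooses smooth densities $f_i\omega^n\to\mu$;
\item it corrects $G_x$ by smooth solutions $\widetilde G_i$ of $dd^c(\widetilde G_i\omega^{n-1})=(f_i-1)\omega^n$;
\item it proves the resulting distributions are weakly bounded via $\int_X G_{i,x}\chi\,\omega^n=n\bigl(\psi(x)-\int_X f_i\psi\,\omega^n\bigr)$, where $\Delta_\omega\psi=\chi-\int_X\chi\,\omega^n$;
\item it extracts a weak limit.
\end{enumerate}

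\textbf{Your route.} You derive the kernel statements yourself: the maximum principle gives $\ker P$, and index zero together with $dd^c\omega^{n-1}=0$ gives $\ker L$. You then solve $L(u)=(\delta_x-\mu)/\omega^n$ in one step in $H^{2-s}$ with $s>n$. You should state explicitly that $\mu$, and not only $\delta_x$, lies in $H^{-s}$. This holds for every probability measure, since $H^{s}(X)\hookrightarrow C^0(X)$ for $s>n$. It is exactly what lets you bypass the paper's approximation and compactness step.

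\textbf{Comparison.} Your route is more self-contained and treats all $\mu$ uniformly. The paper's route keeps the classical $L^1$ Green's function of $\omega^n$ in view. That is what yields the remark after the statement that $u_x\in L^1$ when $\mu$ is smooth; your negative-order Sobolev solution does not give this by itself. The paper's boundedness identity in fact forces the limit to satisfy $\int_X u_x\chi\,\omega^n=n\bigl(\psi(x)-\int_X\psi\,d\mu\bigr)$. This is precisely the duality formula your Fredholm argument encodes.
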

When $\mu$ is a smooth measure, then $u_x\in L^1(\dVol)$ and $u_x\omega^{n-1}$ is a Green current for the $0$-submanifold $x\in X$, as in the work of Gillet-Soul\'e \cite{GS}.
\begin{proof}
This result is rather standard, but we provide details for the reader's convenience. The second order differential operator
$$u\mapsto dd^c\left(u\omega^{n-1}\right)$$
is elliptic with kernel consisting precisely of constants (since $\omega$ is Gauduchon), see e.g. \cite{Gau}. Up to a numerical factor, its formal $L^2(\dVol)$ adjoint is the complex Laplacian
\[\Delta_\omega f=\frac{n \omega^{n-1}\wedge dd^c f}{\omega^n},\]
whose kernel are again constants, and whose image consists of functions $L^2$ orthogonal to constant.

A standard result in linear PDE (see e.g. the detailed writeup in \cite[Appendix A]{AS}) shows that we can find a Green's function $G_x\in L^1(\dVol)$ such that $dd^c(G_x\omega^{n-1})=\delta_x-\dVol$ as distributions. Adding a constant to $G_x$ does not change this equation because $\omega$ is Gauduchon, hence we may assume that  $\int_X G_x\dVol=0$, and $G_x$ is then unique. This is our desired $u_x$ in the case when $\mu=\dVol$, while for a general probability measure $\mu$ we argue as follows. By density of smooth forms among currents, we can find probability measures of the form $f_i\dVol$, $f_i\in C^\infty(X)$, which converge to $\mu$ weakly as distributions as $i\to\infty$. 
For each $i$, the function $f_i-1$ is thus $L^2(\dVol)$-perpendicular to the kernel of $\Delta_\omega$, hence by the Fredholm alternative and elliptic regularity we can find a smooth function $\widetilde{G}_i$ with average zero such that $dd^c(\widetilde{G}_i\omega^{n-1})=(f_i-1)\dVol$. Setting then $G_{i,x}:=G_x-\widetilde{G}_i\in L^1(\dVol)$, we see that
$dd^c(G_{i,x}\omega^{n-1})=\delta_x-f_i\dVol$.  We check that the sequence $G_{i,x}$ is weakly bounded in the space of distributions: given a smooth $(n,n)$-form, which we can write as $\chi\omega^n$ for a smooth function $\chi$, we need to check that there is $C>0$ such that
$$\left|\int_X G_{i,x}\chi\omega^n\right|\leq C,$$
for all $i$. For this, write $\hat{\chi}=\chi-\int_X\chi\omega^n$, and as said above we can find a smooth function $\psi$ with $\Delta_\omega\psi=\hat{\chi}$. Then we have
\[\begin{split}
\int_X G_{i,x}\chi\omega^n&=\int_X G_{i,x}\hat{\chi}\omega^n=\int_X G_{i,x}\Delta_\omega\psi\omega^n=n\int_X G_{i,x}dd^c\psi\wedge\omega^{n-1}\\
&=n\left(\psi(x)-\int_Xf_i\psi\omega^n\right),
\end{split}\]
and the RHS is bounded independent of $i$ since it converges to $n(\psi(x)-\int_X\psi \,\mathrm{d}\mu)$.
The sequence $G_{i,x}$ is thus weakly bounded, and hence by standard theory of distributions (see e.g. \cite[P.178, Ex.9]{rudin}) it has a subsequence that weakly converges to a distribution $u_x$, which satisfies $dd^c(u_x\omega^{n-1})=\delta_x-\mu$ and $\int_X u_x\dVol=0$, as desired. Uniqueness of $u_x$ follows as above.
\end{proof}


In particular this shows that the fiber $E(x)$ in \autoref{eqn:E_x_definition} is not empty.
Put together, the Green functions give a section
\[
	s_{\omega}\colon X \to E\quad s_\omega(x):=u_x\omega^{n-1}.
\]

\begin{proposition}[Properties of the Green section]
	\label{prop:properties_of_the_green_section}
	Let $s_\omega\colon X\to E$ be the Green section.
	\begin{enumerate}
		\item It is a smooth section according to \autoref{def:smooth_sections_of_e}.
		\item If $\alpha=dd^c\phi$ with $\int_X \phi \,\mathrm{d}\mu =0$ then
		\[
			s_{\omega}^*\Phi(\alpha)(x)=\phi(x).
		\]
	\end{enumerate}
\end{proposition}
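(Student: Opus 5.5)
The plan is to compute $s_\omega^*\Phi(\alpha)(x)=\ip{u_x\omega^{n-1},\alpha}=\int_X u_x\,\omega^{n-1}\wedge\alpha$ by expressing $\alpha$ through a smooth potential. Part (2) is the model computation, so we do it first.

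\textbf{Part (2).} Let $\alpha=dd^c\phi$ with $\int_X\phi\,\mathrm{d}\mu=0$. Since $dd^c$ is formally self-adjoint on currents, and pairing against a smooth function is legitimate for the distribution $u_x$, we get
\[
\ip{u_x\omega^{n-1},dd^c\phi}=\ip{dd^c(u_x\omega^{n-1}),\phi}=\ip{\delta_x-\mu,\phi}=\phi(x)-\int_X\phi\,\mathrm{d}\mu=\phi(x),
\]
using \eqref{green}. The only care needed is sign and normalization conventions for $dd^c$ on currents, which are fixed by the same conventions used in the proof of \autoref{greensect}.

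\textbf{Part (1).} Fix a smooth closed real $(1,1)$-form $\alpha$. We reduce to a smooth function built from $\alpha$ by pairing $\alpha$ against the Gauduchon form. Set
\[
h:=\frac{n\,\alpha\wedge\omega^{n-1}}{\omega^n}\in C^\infty(X),\qquad\text{so that}\qquad \int_X u_x\,\alpha\wedge\omega^{n-1}=\frac1n\int_X u_x\, h\,\omega^n .
\]
Write $h=c+\hat h$ with $c=\int_X h\,\omega^n$ and $\int_X\hat h\,\omega^n=0$. The constant part contributes $\frac{c}{n}\int_X u_x\dVol=0$, by the normalization in \autoref{greensect}.

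For the remaining part, solve $\Delta_\omega\psi=\hat h$ with $\psi$ smooth; this is possible because $\hat h$ is orthogonal to the kernel of the adjoint operator (the constants), exactly as in the proof of \autoref{greensect}. Repeating the integration by parts from that proof gives
\[
\frac1n\int_X u_x\,\hat h\,\omega^n=\int_X u_x\, dd^c\psi\wedge\omega^{n-1}=\psi(x)-\int_X\psi\,\mathrm{d}\mu .
\]
Hence
\[
s_\omega^*\Phi(\alpha)(x)=\psi(x)-\int_X\psi\,\mathrm{d}\mu,
\]
which is smooth in $x$ because $\psi$ is smooth by elliptic regularity and $\int_X\psi\,\mathrm{d}\mu$ does not depend on $x$. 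Note that this step only uses that $\alpha$ is a smooth real $(1,1)$-form; closedness of $\alpha$ is not needed here.

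\textbf{Main obstacle.} The main obstacle is justifying the manipulations with $u_x$ when $\mu$ is only a measure, so that $u_x$ is merely a distribution. All pairings above are pairings of a distribution against smooth test forms, so they are well-defined. The identity
\[
\ip{u_x\omega^{n-1},dd^c\psi}=\ip{\delta_x-\mu,\psi}
\]
is exactly the distributional definition of \eqref{green}, so no regularity of $u_x$ is actually needed. Alternatively, one can pass to the limit along the approximations $G_{i,x}$ used in the proof of \autoref{greensect}, where the same formula was already verified.
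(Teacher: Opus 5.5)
Your proposal is correct and follows essentially the same route as the paper. Part (2) is the same self-adjointness computation, $\langle u_x\omega^{n-1},dd^c\phi\rangle=\langle\delta_x-\mu,\phi\rangle=\phi(x)$. Part (1) is the same reduction of $\omega^{n-1}\wedge\alpha/\omega^n$ to its mean-zero part (the constant is killed by $\int_X u_x\,\omega^n=0$), followed by solving a Laplace equation for $\psi$ and integrating by parts against $u_x\omega^{n-1}$. The only cosmetic difference is that the paper normalizes $\int_X\psi\,\mathrm{d}\mu=0$ to get exactly $\psi(x)$, whereas you keep the constant $-\int_X\psi\,\mathrm{d}\mu$ explicitly.
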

\begin{proof}
	Let $\alpha\in \cZ^{1,1}(X)$ be a smooth closed $(1,1)$-form.
	We must show $\rho(x):=\int_X u_x \omega^{n-1}\wedge\alpha$ is a smooth function on $X$.
	The scalar $c:=\int_X \omega^{n-1}\wedge \alpha$ has the property that the smooth function
	\[
		f=\left[\frac{\omega^{n-1}\wedge\alpha}{\omega^{n}}-c\right] \text{ satisfies }\int_X f \dVol = 0.
	\]
We can then find a smooth function $\psi$ with $\int_X \psi\,\mathrm{d}\mu=0$ satisfying $\frac{1}{n}\Delta_\omega\psi=f$.
	For any $x\in X$, using again that $\int_X u_x \dVol = 0$ and $dd^c(u_x\omega^{n-1})=\delta_x-\mu$, we find that
	\[\begin{split}
		\rho(x) &= \int_X u_x \left(\frac{\omega^{n-1}\wedge\alpha}{\omega^n}-c\right)\dVol = \int_X u_x f \dVol\\
&=\frac{1}{n}\int_X u_x\Delta_\omega\psi \dVol=\int_X u_x dd^c\psi\wedge\omega^{n-1}=\psi(x),
	\end{split}\]
which shows that $\rho$ is smooth.

	Part (ii) follows from the calculation
	\[
		\Phi(\alpha)(s_{\omega}(x))
		=
		\int_{X}\left(dd^c \phi\right)u_x \omega^{n-1}
		=
		\int_X \phi\cdot  dd^c\left(u_x \omega^{n-1}\right)
		= \phi(x).
	\]
\end{proof}

\subsubsection{Defining the pluriharmonic sections}
	\label{sssec:defining_the_pluriharmonic_sections}
In order to specify the subsheaf $\cP_E$ of smooth sections of $E$ which we call ``pluriharmonic sections'', it suffices to define $\cP_E(U)$ for each sufficiently small open set $U$ (since these form a basis of the topology on $X$). Furthermore, it suffices to specify a single such section that has the properties required in \autoref{thm:properties_of_universal_bundle}.
Indeed the difference of any two sections $s_1,s_2$ of $E$ is a vector-valued function $s_1-s_2\colon U\to V$, and for these pluriharmonicity is defined in the usual way (namely, for every $[\alpha]\in V^\dual,$ the function $\langle[\alpha],s_1-s_2\rangle$ is pluriharmonic), and property \eqref{ddc} is preserved since
\[
dd^c((s_1-s_2)^*\Phi(\alpha))=dd^c\langle s_1-s_2,[\alpha]\rangle=0.
\]

Fix $\alpha_1,\cdots, \alpha_h\in \cZ^{1,1}(X)$ such that their cohomology classes give a basis of $H^{1,1}(X)$, and fix $\alpha_1^\dual,\cdots, \alpha_h^{\dual}\in \cZ^{n-1,n-1}_{dd^c}(X)$ such that their cohomology classes give a dual basis in $H^{n-1,n-1}_{\rm A}(X)\cong H_{1,1}(X)$.
Then if the open set $U$ is sufficiently small there exist smooth functions $\psi_i\in \cA^0(U)$ such that $dd^c \psi_i = \alpha_i$.
Consider now the smooth functions
\[
	f_i(x):= \psi_i(x) - \int_{X}u_x \omega^{n-1}\wedge \alpha_i
	\quad
	\text{ defined on }U,
\]
where smoothness follows from the argument in \autoref{prop:properties_of_the_green_section}.
Let us check that the section $s_U\colon U\to E\vert_U$ given by
\[
	s_{U}(x):= s_\omega(x) + \sum {f_i}(x)\cdot \alpha_i^\dual \in E(x)
\]
satisfies the requirements of a pluriharmonic section.

Given any $\alpha\in \cZ^{1,1}(X)$ there exist unique scalars $c_i\in \bR$ and smooth function $\phi\in \cA^0(X)$ with $\int_X \phi \,\mathrm{d}\mu=0$ such that $\alpha = dd^c \phi + \sum c_i \alpha_i$.
Then the pullback of its canonical potential $\Phi(\alpha)$ under the section $s_U$ satisfies:
\begin{align*}
	s_{U}^{*}\Phi(\alpha)(x) & =
	\int_{X}\left(u_x \omega^{n-1}+ \sum_{i} f_i(x)\alpha_i^\dual\right)\wedge
	\left(dd^c \phi + \sum_{j} c_j \alpha_j\right)\\
	& = \phi(x) + \sum_{j} c_j\left(\int_{X}u_x \omega^{n-1}\wedge \alpha_j\right)
	+ \sum_{i,j} c_j f_i(x) \int_X \alpha_i^\dual\wedge \alpha_j\\
	& = \phi(x) + \sum_{j} c_j\left(\int_{X}u_x \omega^{n-1}\wedge \alpha_j\right)
	+ \sum_{j} c_j f_j(x)\\
	& = \phi(x) + \sum c_j \psi_j(x)
\end{align*}
where to deduce the last line we used that $\left(\int_X u_x \omega^{n-1}\wedge\alpha_j\right) + f_j(x) = \psi_j(x)$.
This calculation shows that $s_U$ satisfies the requirements of a pluriharmonic section since $dd^c\left[s_{U}^{*}\Phi(\alpha)\right]=\alpha\vert_U$.

Note that the pluriharmonic sections are a subsheaf of the smooth sections, since locally they differ from the Green section by smooth $V$-valued functions.

\subsubsection{Checking functoriality properties}
	\label{sssec:checking_functoriality_properties}
Assume the setup of \autoref{thm:functoriality_of_e}.
Given $x\in X$ with $y=f(x)\in Y$, if $\beta\in \cA_{1,1}(X)$ is such that $dd^c \beta=\delta_x - \mu_X$, then
\[
	dd^c (f_*\beta)= f_*\left(dd^c \beta\right) = f_*(\delta_x - \mu_X)=\delta_y - \mu_Y
\]
and $f_*(\cB_{1,1}(X))\subset \cB_{1,1}(Y)$ (since $f$ is holomorphic so $f_*$ commutes with $\partial,\ov{\partial}$), so the fiberwise map
\[
	f_{E,x}\colon E(x)\to E(f(x)) \text{ is well-defined.}
\]
From the construction it is also immediate that the map is affine relative to the map of vector spaces $f_*\colon H_{1,1}(X)\to H_{1,1}(Y)$.

To check compatibility with canonical potentials, we compute:
\begin{align*}
	\Phi_X\left(f^*\alpha\right)([\beta]) & =
	\ip{\beta, f^*\alpha} = \ip{f_*\beta,\alpha}\\
	& = \Phi_Y(\alpha)(f_{E}([\beta]))
\end{align*}
as required.

\subsubsection{Smoothness of pushforward map}
	\label{sssec:smoothness_of_pushforward_map}
To check that $f_E$ is smooth, it suffices to check that the pullback of local coordinate functions on $E_Y$ are smooth functions on $E_X$.
If we fix a collection $\alpha_1,\ldots,\alpha_h\in \cZ^{1,1}(Y)$ whose cohomology classes form a basis of $H^{1,1}(Y)$, and additionally functions $\phi_1,\cdots, \phi_n \in \cA^{0}(Y)$ (with $n=\dim Y$) such that the $\phi_i$ are coordinates near $y=f(x)$, then the canonical potentials $\Phi_Y(\alpha_i)$ and $\Phi_Y(dd^c\phi_j)$ give coordinate functions on $E_Y$ near $y$.
By compatibility of pullback with the canonical potentials established in \autoref{sssec:checking_functoriality_properties}, it follows that the pullbacks of the coordinate functions are smooth on $E_X$, so the map $f_E$ is smooth.

\subsubsection{Construction and properties of the classifyig map}
	\label{sssec:Construction-and-properties-of-the-classifyig-map}
Suppose now that $L\to X$ is a holomorphic line bundle over $X$, with metric $\phi\colon L^\times \to \bR$, and with $c_1(L)\neq 0$ in $H^{1,1}(X)$.
Since $\phi$ satisfies $\phi(\lambda\cdot z)=\log|\lambda| + \phi(z)$ on the fibers, and $\Phi(\omega_\phi)$ defines a nontrivial affine function on the fibers of $E/[\omega_\phi]^\perp$, there is a unique map
\[
	cl_\phi \colon L^\times \to E/[\omega_\phi]^{\perp}
\]
such that $cl_\phi^*(\Phi(\omega_\phi)) = \phi$.
Moreover, $cl_\phi$ satisfies:
\[
	\ip{c_1(L),cl_\phi(\lambda \cdot z) - cl_\phi(z)} = \log |\lambda| \quad
	\forall \lambda\in \bC^\times, \forall z\in L^\times.
\]
Now suppose that $\psi=\phi + p^* f$ is another metric on $L$, where $f$ is a smooth function on $X$.
Then we know that
\[
	\Phi(\omega_{\psi}) = \Phi(\omega_\phi) + p^* \hat{f} \quad \text{on }E/c_1(L)^\perp
\]
where $\hat{f}=f - \int_X f d\mu$.
From the defining properties of $cl_\phi$ and $cl_{\psi}$, subtracting the corresponding equations, we find that
\[
	\ip{c_1(L),cl_{\psi} - cl_\phi} = p^* \left(f - \hat{f}\right) = \int_X fd\mu
\]
and this concludes the proof.

\begin{remark}[A heuristic for the construction of the universal bundle]
	\label{rmk:a_heuristic_for_the_construction_of_the_universal_bundle}
	The definition of the bundle $E$ in \autoref{eqn:E_x_definition} is inspired by the canonical isometric embedding (Kuratowski embedding) of a metric space $X$ into the space of Lipschitz functions on $X$ given by $x\mapsto \dist(-,x)$.
	This embedding can be used to compactify $X$, by taking the quotient modulo constants or by introducing a reference point $x_0\in X$ and using $x\mapsto \dist(-,x)-\dist(x_0,x)$ instead. See e.g. \cite[$3\frac{1}{2}.36$]{Gr}

	The starting point is the short exact sequence of sheaves
	\[
	0\to\mathcal{P}\to \mathcal{A}^{0}\xrightarrow{dd^c} \cZ^{1,1}\to 0
	\]
	where $\mathcal{P}$ denotes the sheaf of real pluriharmonic functions, $\mathcal{A}^0$ the sheaf of smooth functions, $\cZ^{1,1}$ the sheaf of $d$-closed real $(1,1)$ forms.
	The long exact sequence in cohomology becomes:
	\begin{equation}
		\label{eqn:les_pluriharmonic_sheaves}
	0\to \mathbb{R}\to C^\infty(X,\mathbb{R})\to \cZ^{1,1}(X)\to H^1(X,\mathcal{P})\to 0,
	\end{equation}
	and there is a natural isomorphism $H^1(X,\mathcal{P})\cong H^{1,1}(X;\mathbb{R})$ (see e.g. \cite[Corollary 2]{HarveyLawson_Intrinsic}).

	Informally dualizing \autoref{eqn:les_pluriharmonic_sheaves} we find
	\begin{align}
		\label{eqn:basic_LES_currents}
		0\from \mathbb{R}\from
		\cA_{0,0}(X;\mathbb{R})\xleftarrow{dd^c} \cZ_{1,1}(X)\from H_{1,1}(X;\bR)\from 0
	\end{align}
	where $\cZ_{1,1}(X)$ stands for $\cA_{1,1}(X)/\cB_{1,1}(X)$ and
	\[
		\cB_{1,1}(X)=\{\eta\in\mathcal{A}_{1,1}(X)\ |\ \langle\eta,\alpha\rangle=0, \textrm{ for all }\alpha\in \cZ^{1,1}(X)\},
	\]
    and the map $dd^c:\cZ_{1,1}(X)\to \cA_{0,0}(X;\bR)$ is the $dd^c$ operator on currents.
	But the space $\cB_{1,1}(X)$ is in fact equal to the space of $(1,1)$ components of $d$-exact currents, or even more explicitly
	\[
	\cB_{1,1}(X)=\{\eta\in\mathcal{A}_{1,1}(X;\bR)\ |\ \eta=\del \ov{S} + \delbar S, \textrm{ for some }S\in\mathcal{A}_{1,2}(X;\bR)\},		
	\]
	see e.g. \cite[p.267]{Lamari_Courants}, and the quotient of $dd^c$-closed real $(1,1)$ currents modulo $\cB_{1,1}(X)$ is the Aeppli cohomology group $H_{1,1}(X;\bR)$ which as we explained is naturally the dual of $H^{1,1}(X;\mathbb{R})$.

	The embedding $X\into \cA_{0,0}(X;\bR)$ given by $x\mapsto \delta_x - \mu$ allows us to ``pullback'' the universal $H_{1,1}$-bundle from the exact sequence in \autoref{eqn:basic_LES_currents}.
\end{remark}

					\section{Examples and Applications}
					\label{sec:Examples-and-Applications}


\subsection{Examples of varieties with invariant measures}
	\label{ssec:examples_of_varieties_with_invariant_measures}

We now collect some examples of complex manifolds and automorphism groups that preserve a measure.

If the group of holomorphic automorphisms $\Aut(X)$ is amenable, then it always preserves a probability measure when acting on a compact space, see \cite[\S4]{Zimmer1984_Ergodic-theory-and-semisimple-groups} for many equivalent characterizations of amenability.
We now consider conditions on $X$ that might yield a natural invariant measure.

\begin{corollary}[Calabi--Yau case]
	\label{cor:calabi_yau_case}
	Let $X$ be Calabi--Yau, in the sense that $X$ is a compact complex $n$-fold with  torsion canonical bundle.
	Let $\Omega$ be a trivializing section of $\ell K_X, \ell\geq 1$, normalized such that $\mu:=\sqrt{-1}^{n^2} (\Omega \wedge \ov{\Omega})^{\frac{1}{\ell}}$ has integral $1$.

	Then the action of the group of biholomorphic automorphisms $\Aut(X)$ lifts to the bundle $E$ and the projection map $p\colon E\to X$ is equivariant.
	Furthermore, all the constructions are natural for the action of $\Aut(X)$ (see \autoref{rmk:on_naturality}).
\end{corollary}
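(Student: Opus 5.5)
The plan is to reduce the corollary to \autoref{thm:functoriality_of_e} and \autoref{rmk:on_naturality}. The only point that needs checking is that every $\gamma\in\Aut(X)$ preserves $\mu$, i.e.\ that $\Aut(X)=\Aut(X,\mu)$. Once this is known, each $\gamma$ is a holomorphic self-map with $\gamma_*\mu=\mu$. \autoref{thm:functoriality_of_e} then produces a smooth affine map $\gamma_E\colon E\to E$ covering $\gamma$, so $p\circ\gamma_E=\gamma\circ p$. The naturality statements (action on potentials, preservation of pluriharmonic sections) are exactly those recorded in \autoref{rmk:on_naturality}.

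\textbf{Invariance of $\mu$.} Let $\gamma\in\Aut(X)$. Then $\gamma^*$ acts on the space $H^0(X,\ell K_X)$, which is one-dimensional because $\ell K_X$ is trivial. Hence $\gamma^*\Omega=\lambda_\gamma\Omega$ for some $\lambda_\gamma\in\bC^\times$. This gives
\[
\gamma^*\mu=|\lambda_\gamma|^{2/\ell}\mu .
\]
Since $\gamma$ is a diffeomorphism, $\int_X\gamma^*\mu=\int_X\mu=1$, so $|\lambda_\gamma|=1$ and $\gamma^*\mu=\mu$, equivalently $\gamma_*\mu=\mu$. Two small points need care here. First, the $\ell$-th root $(\Omega\wedge\ov{\Omega})^{1/\ell}$ is a well-defined positive smooth volume form: locally $\Omega=h\,(dz_1\wedge\cdots\wedge dz_n)^{\otimes\ell}$ with $h$ nowhere vanishing, and the root is $|h|^{2/\ell}$ times the standard volume form. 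Second, pullback commutes with this operation.

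\textbf{Group action.} Next I will check that $\gamma\mapsto\gamma_E$ is a left action. The fiber map $E(x)\to E(\gamma x)$ is induced by pushforward of currents, $[\beta]\mapsto[\gamma_*\beta]$. Pushforward is functorial, $(\gamma_1\gamma_2)_*=(\gamma_1)_*(\gamma_2)_*$, and $\mathrm{id}_*=\mathrm{id}$. Therefore $(\gamma_1\gamma_2)_E=(\gamma_1)_E\circ(\gamma_2)_E$, so the assignment is a group homomorphism into the diffeomorphisms of $E$. Smoothness of each $\gamma_E$ follows from \autoref{sssec:smoothness_of_pushforward_map}.

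\textbf{Naturality.} The identity $\Phi(\gamma^*\alpha)=\gamma_E^*\Phi(\alpha)$ is the pullback clause of \autoref{thm:functoriality_of_e}. Preservation of $\cP_E$ follows from the identity in \autoref{rmk:on_naturality} together with $\gamma_*$ on $H_{1,1}$ being dual to $\gamma^*$ on $H^{1,1}$.

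I do not expect a genuine obstacle. The only substantive step is the invariance of $\mu$, and within it the slightly delicate point is the well-definedness of $(\Omega\wedge\ov{\Omega})^{1/\ell}$ for $\ell>1$.
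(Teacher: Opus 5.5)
Your proposal is correct and follows the paper's proof, which simply observes that every $\gamma\in\Aut(X)$ satisfies $\gamma_*\mu=\mu$ and then invokes \autoref{thm:functoriality_of_e}. You additionally supply two details the paper leaves implicit: the justification of invariance ($\gamma^*\Omega=\lambda_\gamma\Omega$, with the mass normalization forcing $|\lambda_\gamma|=1$), and the check that $\gamma\mapsto\gamma_E$ is a left action.
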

\begin{proof}
	We have $f_*\mu=\mu$ for all $f\in \Aut(X)$, and so the functoriality properties in \autoref{thm:functoriality_of_e} yield the result.
\end{proof}

Generalizing \autoref{cor:calabi_yau_case}, we have the following known result which appears in \cite[Theorem 3.1]{sakai}:

\begin{proposition}\label{nonnegkod}
Let $X$ be a compact complex manifold $X$ with nonnegative Kodaira dimension, which by definition means that $H^0(X,mK_X)\neq 0$ for some $m\geq 1$. Then $X$ admits a $\mathrm{Bim}(X)$-invariant probability measure $\mu$. This measure is absolutely continuous with respect to Lebesgue, with continuous density.
\end{proposition}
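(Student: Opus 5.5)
The plan is to build $\mu$ from pluricanonical sections, in the spirit of the Bergman kernel/Narasimhan--Simha construction. Fix $m\geq 1$ with $W:=H^0(X,mK_X)\neq 0$; $W$ is finite-dimensional. For $\sigma\in W$ the expression $|\sigma|^{2/m}:=\bigl(\sqrt{-1}^{mn^2}\sigma\wedge\ov{\sigma}\bigr)^{1/m}$ is a well-defined nonnegative continuous $(n,n)$-form. It is canonical in $\sigma$ and needs no choice of metric, since locally $\sigma=g\,(dz_1\wedge\cdots\wedge dz_n)^{\otimes m}$ and $|\sigma|^{2/m}=|g|^{2/m}\,\sqrt{-1}^{n^2}dz\wedge d\ov{z}$ transforms correctly. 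Define a norm on $W$ by $\|\sigma\|:=\bigl(\int_X|\sigma|^{2/m}\bigr)^{m/2}$. This is finite because the density is continuous on compact $X$. It is homogeneous of degree one, and it is definite by the identity principle. It is not a Hilbert norm for $m\geq 2$, but it is a genuine (possibly nonconvex, quasi-)norm whose unit ball $B\subset W$ is compact.

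Next, let $\mu_0(x):=\sup_{\sigma\in B}|\sigma|^{2/m}(x)$, pointwise as $(n,n)$-forms. I would check three things. First, the supremum is finite and continuous in $x$: since $B$ is compact and $(\sigma,x)\mapsto|\sigma|^{2/m}(x)$ is jointly continuous in local trivializations, the supremum of a compact family of equicontinuous functions is continuous. Second, $\mu_0$ is not identically zero, because $W\neq 0$. Third, $\mu_0$ is invariant under any bimeromorphic map $f$. Pullback $f^*$ acts on $W$: pluricanonical forms extend across the indeterminacy locus by Hartogs, since $f$ is holomorphic off codimension $\geq 2$. It preserves $\int_X|\sigma|^{2/m}$ by change of variables on the dense open set where $f$ is biholomorphic, so $f^*B=B$. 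Hence $f^*\mu_0=\mu_0$ on that open set. Because the exceptional sets have measure zero, the normalized measure $\mu:=\mu_0/\int_X\mu_0$ satisfies $f_*\mu=\mu$. By construction $\mu$ is absolutely continuous with continuous density.

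The main obstacle is making the bimeromorphic invariance rigorous. Specifically, one needs that $f^*$ maps $H^0(X,mK_X)$ isomorphically to itself, and that the integrals agree even though $f$ is only bimeromorphic. I would handle this by resolving $f$ by a manifold $Z$ with two proper modifications $p,q\colon Z\to X$ such that $f=q\circ p^{-1}$. Then pullback identifies $H^0(X,mK_X)\cong H^0(Z,mK_Z)$ on both sides; the section extends over the exceptional divisor because $K_Z=p^*K_X+(\text{effective})$. The integrals of $|\cdot|^{2/m}$ agree because $p$ and $q$ are biholomorphic outside measure-zero sets. Everything else is a routine continuity and compactness check.
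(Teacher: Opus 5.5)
Your proposal is correct and follows essentially the same route as the paper: the $L^{2/m}$ pseudonorm on $H^0(X,mK_X)$, the Narasimhan--Simha type Bergman kernel $\sup_{\|\sigma\|\le 1}|\sigma|^{2/m}$ normalized to a probability measure, and invariance because bimeromorphic pullback (defined by Hartogs extension across the codimension $\geq 2$ indeterminacy locus) is a norm-preserving automorphism of this finite-dimensional space. The differences are minor. You prove continuity of the density directly from compactness of the unit ball, where the paper cites the literature. Your resolution-of-the-graph detour is unnecessary, and in it sections of $mK_Z$ descend to $X$ by Hartogs across the codimension $\geq 2$ set $p(\mathrm{Exc}\,p)$, not by extending over the exceptional divisor.
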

This is a generalization of the Calabi-Yau volume form in \autoref{cor:calabi_yau_case}, and was first discovered by Narasimhan-Simha \cite{NS} when $K_X$ is ample. 
For the reader's convenience we give the short proof.
\begin{proof}
Fix a value of $m\geq 1$ for which $H^0(X,mK_X)\neq 0$ and consider the $L^{\frac{2}{m}}$ pseudonorm on $H^0(X,mK_X)$ defined by
$$\|s\|_{m}=i^{n^2}\left(\int_X(s\wedge\ov{s})^{\frac{1}{m}}\right)^m,$$
which is not induced by a Hermitian inner product when $m\geq 2$.
Nonetheless we can use it to define the $m$th Bergman kernel $(n,n)$-form
$$B_m(x)=\sup_{\|s\|_m=1}i^{n^2}(s\wedge\ov{s})^{\frac{1}{m}}(x),$$
which is continuous on $X$ (see \cite{NS} or \cite[Lemma 2.2 (2)]{taka}), nonnegative and strictly positive on the nonempty Zariski open set whose complement is the base locus of $mK_X$ (the intersection of all the zero loci of all sections of $mK_X$).
We thus define the probability measure
$$\mu=\frac{B_m}{\int_X B_m}.$$
Given $f\in\mathrm{Bim}(X)$ and $s\in H^0(X,mK_X)$, we can define its pullback $f^*s\in H^0(X,mK_X)$ by first pulling back via $f$ away from the indeterminacy locus of $f$ (which is a closed analytic subset of codimension at least $2$ in $X$) and then extending the pullback section across the indeterminacy using Hartogs' Theorem, and the resulting map $f^*:H^0(X,mK_X)\to H^0(X,mK_X)$ is an isomorphism.
Thus, we can also pull back $B_m$ (and $\mu$) via $f$, and noticing that clearly
$$\|f^*s\|_{m}=\|s\|_m,$$
we see that
$$f^*B_m=\sup_{\|s\|_m=1}i^{n^2}(f^*s\wedge\ov{f^*s})^{\frac{1}{m}}=\sup_{\|f^*s\|_m=1}i^{n^2}(f^*s\wedge\ov{f^*s})^{\frac{1}{m}}=B_m,$$
which proves the $\mathrm{Bim}(X)$-invariance of $\mu$.
\end{proof}

The assumption of nonnegativity of the Kodaira dimension cannot be dropped in general, as shown in \S \ref{sssec:failure_for_negative_kodaira_dimension}. However, there is a weaker condition which is sufficient to obtain a probability measure invariant under the group of pseudoautomorphisms, as we now explain. We say that the canonical bundle $K_X$ of a compact complex manifold $X$ is pseudoeffective if it admits a singular Hermitian metric with nonnegative curvature form. Equivalently, fixing a smooth Hermitian metric $h$ on $K_X$ with curvature form $\alpha$, which is a closed real $(1,1)$-form cohomologous to $c_1(K_X)$, pseudoeffectivity means that there exists a quasi-psh function $\vp$ on $X$ with $\alpha+dd^c\vp\geq 0$ in the weak sense.

If $X$ has nonnegative Kodaira dimension then $K_X$ is pseudoeffective: it suffices to take any $m\geq 1$ for which $H^0(X,mK_X)\neq 0$, take a nontrivial section $s\in H^0(X,mK_X)$ and let $\vp=\frac{1}{m}\log|s|^2_h$, which is quasi-psh and satisfies $\alpha+dd^c\vp\geq 0$. The converse implication, that $K_X$ pseudoeffective should imply that $X$ has nonnegative Kodaira dimension (when $X$ is projective), is a well-known open problem in birational geometry, known as the Nonvanishing Conjecture, see e.g. \cite{DHP}. Furthermore, for compact K\"ahler manifolds, $K_X$ being pseudoeffective is equivalent to $X$ not being uniruled: this remarkable geometric characterization was proved in \cite{BDPP} for projective manifolds, and in \cite{Ou} for compact K\"ahler manifolds. On the other hand, there are counterexamples to the Nonvanishing Conjecture for general compact complex manifolds (see e.g. \cite[Ex.3.1]{nkcy}), so in general $K_X$ pseudoeffective is strictly weaker than $X$ having nonnegative Kodaira dimension.

\begin{proposition}\label{kxpsef}
Let $X$ be a compact complex manifold $X$ whose canonical bundle $K_X$ is pseudoeffective. Then $X$ admits a $\mathrm{PsAut}(X)$-invariant probability measure $\mu$.
This measure is absolutely continuous with respect to Lebesgue, with bounded density.
\end{proposition}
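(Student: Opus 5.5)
The natural candidate is a metric with \emph{minimal singularities} on $K_X$, since such metrics are canonical up to bounded functions. Fix a smooth Hermitian metric $h$ on $K_X$ with curvature $\alpha$, and set
\[
V_{\min}:=\sup\{\vp \ \text{quasi-psh}\ :\ \alpha+dd^c\vp\geq 0,\ \vp\leq 0\}.
\]
Pseudoeffectivity makes the family nonempty. The upper semicontinuous regularization $V_{\min}^*$ is itself a competitor, so $V_{\min}=V_{\min}^*$ is quasi-psh, bounded above by $0$, and has minimal singularities: every other competitor $\vp$ satisfies $\vp\leq V_{\min}+C$.

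The metric $h e^{-V_{\min}}$ on $K_X$ gives a positive measure
\[
\nu:=e^{-V_{\min}}\, h^{-1},
\]
where $h^{-1}$ is the smooth volume form determined by $h$ on $K_X$, via $|\sigma|^2_h\, h^{-1}=i^{n^2}\sigma\wedge\ov\sigma$. Since $V_{\min}\leq 0$, the density of $\nu$ is bounded below. We will however use the opposite normalization. Being quasi-psh, $V_{\min}$ is bounded above, but it can be $-\infty$ somewhere, so $e^{-V_{\min}}$ need not be bounded or even integrable. The fix is to use $e^{+V_{\min}}$, i.e.\ to view the metric as living on $-K_X$. Concretely, $e^{V_{\min}}h^{-1}$ is the volume form $|\cdot|^2$ for the dual metric $h^{-1}e^{V_{\min}}$ on $-K_X$, since a metric on $-K_X$ is exactly a volume form. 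So we define
\[
\mu:=\frac{e^{V_{\min}}h^{-1}}{\int_X e^{V_{\min}}h^{-1}}.
\]
Its density is bounded because $V_{\min}\leq 0$. The normalizing integral is positive because $V_{\min}$ is not identically $-\infty$.

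The first check is that $\mu$ is independent of the choice of $h$. If $h'=he^{-f}$, then $\alpha'=\alpha+dd^c f$, and the competitors for $h'$ are exactly $\vp-f$ with $\vp$ a competitor for $h$. The constraint $\vp\leq 0$ becomes $\vp-f\leq 0$, which is a different constraint, so $V'_{\min}$ is not exactly $V_{\min}-f$. This normalization problem disappears if we instead define the metric on $K_X$ intrinsically:
\[
\Theta:=\sup\{\text{singular metrics } g \text{ on } K_X \text{ with nonnegative curvature and } g\leq h\}.
\]
The bound $g\leq h$ still involves $h$, so it does not remove the issue. The better normalization is against volume. A singular metric $g$ on $K_X$ determines the measure $g^{-1}$, and we use
\[
\mathcal{V}:=\sup\Big\{g^{-1}\ :\ g \text{ a psh metric on } K_X,\ \int_X g^{-1}\leq 1\Big\},
\]
taking the usc regularization of the corresponding potentials, and set $\mu:=\mathcal{V}/\int_X\mathcal{V}$. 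Here $g^{-1}$ is the volume form $e^{\vp}h^{-1}$ when $g=he^{-\vp}$. This definition involves no choices, since both the psh condition and the volume constraint are intrinsic.

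Boundedness of the density comes from a uniform bound: $\sup_X\vp$ is uniformly bounded above for quasi-psh $\vp$ with $\alpha+dd^c\vp\geq0$ and $\int_X e^{\vp}h^{-1}\leq 1$. This follows from compactness of normalized $\alpha$-psh functions in $L^1$, together with the sub-mean value inequality, which gives
\[
\sup_X\vp\leq \int_X\vp\, dV+C.
\]
Jensen's inequality bounds $\int_X\vp\,dV$ above by $\log\int_X e^{\vp}dV$ (up to constants), and that is at most a constant by the volume constraint. The usc regularization is again admissible, with integral at most $1$ up to a Lebesgue-null modification, since Choquet's lemma reduces the sup to a countable sup and monotone convergence controls the integral. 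Hence the density of $\mu$ is bounded and $\mu$ is a probability measure.

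The remaining step, which I expect to be the main obstacle, is invariance under $f\in\mathrm{PsAut}(X)$. A pseudoautomorphism is biholomorphic off analytic sets $I_f,I_{f^{-1}}$ of codimension at least $2$. On $X\setminus I_f$, pullback $f^*g$ of a psh metric on $K_X$ is again a psh metric on $K_X$, and $(f^*g)^{-1}=f^*(g^{-1})$ has the same total volume, since analytic sets are null and $f$ is a biholomorphism of the complements. The local weights of $f^*g$ are psh off codimension $\ge2$ and bounded above near $I_f$ by the uniform bound above, so they extend across $I_f$ by the Riemann extension theorem for psh functions. Thus $g\mapsto f^*g$ is a bijection of the competitor set, with inverse induced by $f^{-1}$. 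It follows that $f^*\mathcal V=\mathcal V$ off a null set, so $f_*\mu=\mu$. The delicate points are this extension across the indeterminacy locus and the commutation of the sup with $f^*$ under usc regularization. Both reduce to the fact that sets of codimension $\geq2$ are pluripolar and Lebesgue-null, so changing the functions there affects neither the psh extension nor the measure.
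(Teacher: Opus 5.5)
Your final construction is the paper's. The volume-normalized envelope $\mathcal V$ is Tsuji's supercanonical measure $e^{\varphi_{\rm can}}\omega^n$. Your invariance argument (transport competitors by $f$ on $X\setminus I_f$, extend, use $f^{-1}$ for the converse) is the paper's, phrased as a bijection instead of an inequality plus symmetry. Your uniform upper bound via $L^1$-compactness of normalized $\alpha$-psh functions and Jensen is a valid substitute for the paper's Green-function argument with a Gauduchon metric; compactness is local, so it holds on any compact complex manifold.

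\emph{A false claim.} You assert that the regularized envelope $\varphi^*_{\rm can}$ is admissible, i.e.\ $\int_X e^{\varphi^*_{\rm can}}h^{-1}\le 1$. This is false in general. Choquet's lemma does give a countable family $\varphi_j$, but the running maxima $\max(\varphi_1,\dots,\varphi_j)$ are not competitors. If $\int_X e^{\varphi_1}h^{-1}=1$ and $\varphi_2>\varphi_1$ on a set of positive measure, then $\int_X e^{\max(\varphi_1,\varphi_2)}h^{-1}>1$. So monotone convergence gives nothing.

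In fact, suppose $\varphi^*_{\rm can}$ were admissible. Then any $\alpha$-psh $\varphi$ normalized by $\int_X e^{\varphi}h^{-1}=1$ would satisfy $\varphi\le\varphi^*_{\rm can}$ and $1\le\int_X e^{\varphi^*_{\rm can}}h^{-1}\le 1$. This forces $\varphi=\varphi^*_{\rm can}$ almost everywhere, hence everywhere. All $\alpha$-psh functions would then differ by constants, which fails for instance when $K_X$ is ample. This is why the paper remarks that $\int_X e^{\varphi_{\rm can}}\omega^n$ has no reason to equal $1$.

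The claim is not load-bearing. Bounded density only uses the uniform bound on competitors, and you normalize by $\int_X\mathcal V$ anyway, so simply delete it.

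\emph{The real gap: extension across $I_f$.} Write $f^*(h^{-1})=e^{\psi}h^{-1}$ on $X\setminus I_f$. The potential of $f^*g$ relative to $h$ is then $f^*\varphi+\psi$. Here $\psi$ is smooth only on $X\setminus I_f$, and nothing in your argument controls it near $I_f$.

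Your uniform upper bound applies only to functions already known to be $\alpha$-psh on all of $X$. Using it to bound $f^*\varphi+\psi$ near $I_f$ presupposes the extension you are trying to prove, so the argument is circular.

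The missing ingredient is the Grauert--Remmert theorem, which is what the paper uses. A psh function on $U\setminus A$, with $A$ analytic of codimension at least $2$, extends to a psh function on $U$ with no boundedness hypothesis; local boundedness near $A$ is a consequence, not an input. Apply it to $f^*\varphi+\psi+u$, with $u$ a local potential of $\alpha$. This shows that $f^*\varphi+\psi$ extends to a competitor on $X$, with $\int_X e^{f^*\varphi+\psi}h^{-1}=\int_X e^{\varphi}h^{-1}$ because analytic sets are null. The rest of your bijection argument, including the commutation with usc regularization on $X\setminus I_f$, then goes through.
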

When $X$ is a projective manifold, the two measures constructed in \autoref{nonnegkod} and \autoref{kxpsef} are directly related, see \cite[Proposition 5.19]{BD}.
\begin{proof}
The measure $\mu$ is Tsuji's ``supercanonical measure'' constructed in \cite{Tsuji}. We follow here the exposition of the construction in \cite[\S 5]{BD}, although the $\mathrm{PsAut}(X)$-invariance of $\mu$ does not appear in these references. Fix a Gauduchon metric $\omega$ on $X$, and let $h$ be the metric it induces on $K_X$, whose curvature equals $\alpha:=-\mathrm{Ric}(\omega)$. If $\vp$ is a quasi-psh function with $\alpha+dd^c\vp\geq 0$, then $he^{-\vp}$ is a singular Hermitian metric on $K_X$ with nonnegative curvature, and we define
$$\vp_{\rm can}(x)=\sup\{\vp(x)\ |\ \alpha+dd^c\vp\geq 0, \int_X e^\vp\omega^n\leq 1\}.$$
Since $K_X$ is pseudoeffective, $\vp_{\rm can}\not\equiv-\infty$. Next, we claim that $\vp_{\rm can}$ is bounded above. For this, we need the existence of a Green's function $G_x$ (for any $x\in X$), which is in $L^1(\omega^n)$ with average zero and bounded below by some constant $-A$ (see again \cite[Appendix A]{AS}), such that if $\vp$ is any function as in the definition of $\vp_{\rm can}$, then we have
\[\vp(x)=\int_X\vp\omega^n -\int_XG_x\Delta_\omega \vp  \omega^n \leq \int_Xe^{\vp}\omega^n -\int_X(G_x+A) \Delta_\omega \vp \omega^n,\]
using that $\int_X \Delta_\omega\vp\omega^n=0$ since $\omega$ is Gauduchon. But then we have $G_x+A\geq 0$ and $\tr_{\omega}{\alpha}+\Delta_\omega\vp\geq 0$, so
\[\vp(x)\leq 1+\int_X\tr_{\omega}{\alpha}(G_x+A) \omega^n\leq 1+\|\tr_{\omega}{\alpha}\|_{L^\infty(X)} A \int_X\omega^n,\]
and the RHS is uniformly bounded independent of $\vp$ and $x$. Thus all the functions $\vp$ in the supremum in the definition of $\vp_{\rm can}$ have a uniform upper bound, and so $\vp_{\rm can}$ is bounded above. It is then easy to conclude (see \cite[\S 5]{BD}) that $\vp_{\rm can}$ is a quasi-psh function with $\alpha+dd^c\vp_{\rm can}\geq 0$. Since $\vp_{\rm can}$ is bounded above and not identically $-\infty$, we have $0<\int_Xe^{\vp_{\rm can}}\omega^n<\infty$ (although it has no reason to be equal to $1$). We then define
$$\mu=\frac{e^{\vp_{\rm can}}\omega^n}{\int_Xe^{\vp_{\rm can}}\omega^n}.$$
We need to show that for any $T\in\mathrm{PsAut}(X)$ we have $T_*\mu=\mu$. Since $T$ is a pseudoautomorphism, we can find closed analytic subvarieties $I,J\subset X$ of codimension at least $2$ such that $T:X\backslash I\to X\backslash J$ is well-defined and a biholomorphism.
Then on $X\backslash I$ we have that $T^*\omega^n$ is a smooth positive volume form, and we can write
\begin{equation}\label{pullb}
T^*\omega^n=e^\psi\omega^n,
\end{equation}
for some smooth function $\psi$ on $X\backslash I$, with $\int_{X\backslash I}e^\psi\omega^n=\int_X\omega^n$. Taking $dd^c\log$ of \autoref{pullb} we see that on $X\backslash I$ we have
$$T^*\alpha = \alpha+dd^c\psi.$$
If $\vp$ is any quasi-psh function with $\alpha+dd^c\vp\geq 0$ and $\int_X e^\vp\omega^n\leq 1$, then on $X\backslash I$ we have
$$\alpha+dd^c(T^*\vp+\psi)=T^*(\alpha+dd^c\vp)\geq 0,$$
and so $T^*\vp+\psi$ is quasi-psh on $X\backslash I$. By the Grauert-Remmert extension theorem \cite{grauert_remmert}, $T^*\vp+\psi$ extends to a quasi-psh function on $X$ (denoted in the same way), which still satisfies $\alpha+dd^c(T^*\vp+\psi)\geq 0$ weakly on $X$. We also clearly have
$$\int_X e^{T^*\vp+\psi}\omega^n=\int_X e^{T^*\vp}T^*\omega^n=\int_Xe^{\vp}\omega^n\leq 1,$$
and so on $X\backslash I$ we have
\[\begin{split}
T^*\vp_{\rm can}+\psi&=T^*\sup\{\vp\ |\ \alpha+dd^c\vp\geq 0, \int_X e^\vp\omega^n\leq 1\}+\psi\\
&=\sup\{T^*\vp+\psi\ |\ \alpha+dd^c\vp\geq 0, \int_X e^\vp\omega^n\leq 1\}\\
&\leq \sup\{T^*\vp+\psi\ |\ \alpha+dd^c(T^*\vp+\psi)\geq 0, \int_X e^{T^*\vp+\psi}\omega^n\leq 1\}\\
&=\sup\{\eta\ |\ \alpha+dd^c\eta\geq 0, \int_X e^{\eta}\omega^n\leq 1\}=\vp_{\rm can},
\end{split}\]
and since $I$ has Lebesgue measure zero, this implies that
$$T^*\mu\leq \mu,$$
and applying the same argument to $T^{-1}$ concludes the proof (recall that $T_*=(T^{-1})^*$).
\end{proof}

\subsubsection{Failure for negative Kodaira dimension}
	\label{sssec:failure_for_negative_kodaira_dimension}
\autoref{nonnegkod} fails in general when $X$ has negative Kodaira dimension. For example let $X=\mathbb{P}^n(\bC)$, and suppose that we had an $\Aut(\mathbb{P}^n)$-invariant probability measure $\mu$.
On the one hand, we can fix a positive-definite hermitian product on $\bC^{n+1}$ and consider the associated unitary group $H:=\bbU(\bC^{n+1})$.
Any $\Aut(\bP^n)$-invariant probability measure on $\bP^n(\bC)$ will then have to be the homogeneous one induced by the Fubini--Study metric, since $\bP^n(\bC)\isom H/H'$ where $H'=\bbU(\bC)\times\bbU(\bC^n)$.
On the other hand, this measure is not preserved by $\PGL_{n+1}(\bC)$.

\subsubsection{Remark}
Robert Berman pointed out to us that in \cite{Ber1} he gives another probabilistic construction of an $\mathrm{Aut}(X)$-invariant probability measure on any compact K\"ahler manifold $X$ with $K_X$ effective. 

	\subsection{Universal torsor}
		\label{ssec:Universal-torsor}

We now collect an application to the action of $\Aut(X)$ on holomorphic line bundles.

\subsubsection{Setup}
	\label{sssec:Setup-universal_torsor}

Suppose $X$ is an algebraic variety over $\bC$.
Assume that $\Pic^0(X)=0$ and set $N_{\bZ}:=\Pic(X)$, then taking the first Chern class yields a map $N_{\bZ}\to H^{1,1}$ with kernel the finite torsion subgroup of $N_{\bZ}$.
For simplicity, we will assume that the torsion subgroup is trivial.

\begin{definition}[Universal torsor]
	\label{def:Universal-torsor}
	The \emph{\Neron--Severi torus} of $X$ is the algebraic torus $\bbT_X$ with character group $N_{\bZ}$, i.e. $\Hom(\bbT_X,\bbG_M)=N_{\bZ}$.
	
	A \emph{universal torsor} is a principal $\bbT_X$-bundle $\cU\to X$, with the property that for any $\chi\in N_{\bZ}$ and line bundle $L_{\chi}$, the space $\cU{\times}_{\bbT_X} L_{\chi}$ is isomorphic to $L_{\chi}^\times$.
\end{definition}
\noindent We recall that for a line bundle $L$ on $X$, the space $L^\times$ is the complement of the zero section in $L$.

\begin{remark}[On universal torsors]
	\label{rmk:On-universal-torsors}
	\leavevmode
	\begin{enumerate}
		\item
		Any two universal torsors are isomorphic, with isomorphism unique up to translation by an element of $\bbT_X$.
		\item
		The discussion works over any base field $k$, using the Picard group over an algebraic closure $\ov{k}$, which is equipped with a Galois action, to obtain a torus $\bbT_{X,\ov{k}}$ with a Galois action, which can therefore be descended to a torus over $k$.
		\item
		To deal with torsion in the Picard group, one can either pass to a finite-index subgroup of it, or allow $\bbT_X$ to have finitely many connected components.
	\end{enumerate}
	See Colliot-Th\'{e}l\`ene and Sansuc \cite{Colliot-TheleneSansuc1976_Torseurs-sous-des-groupes-de-type-multiplicatif;-applications-a-letude} and Hassett and Tschinkel \cite{HassettTschinkel2004_Universal-torsors-and-Cox-rings} for background on universal torsors.
\end{remark}

\subsubsection{A construction and action}
	\label{sssec:a_torsor}
Let us see first that a universal torsor always exists. In the case of a single automorphism, this construction appears in \cite[\S 3.1]{cantat}.

Fix a set $L_1,\ldots, L_{\rho}$ of line bundles whose first Chern classes provide a basis of $N_{\bZ}$, which gives us an isomorphism $\bZ^\rho \to N_{\bZ}$.
Set now:
\[
	\cU:= L_1^{\times} \times_X \dots \times_X L_{\rho}^{\times}
\]
where the product of line bundles is fibered over $X$.
This is a torsor over $(\bC^{\times})^\rho$, which is then viewed as a principal $\bbT_X$-bundle via the isomorphism $\bZ^\rho \to N_{\bZ}$.
It is clearly a universal torsor.

Suppose now $\gamma\in \Aut(X)$ is an automorphism of $X$.
The universal torsors $\gamma_*\cU$ and $\cU$ are isomorphic, since $\gamma_* L_i$ still yield a basis of $N_{\bZ}$.
Indeed, $\gamma_* L_i^\times \isom \otimes (L_j^\times)^{a_{i,j}}$ where $(a_{i,j})$ is the matrix of $\gamma$ acting on $N_{\bZ}$ in the basis provided by the $L_i$.
We can thus choose a map:
\[
	c_{\gamma}\colon \gamma_* \cU \toisom \cU \quad \text{for any }\gamma\in \Aut(X)
\]
and we have a commutative diagram:
\begin{equation}
	\label{eqn_cd:action_and_isomorphism}
\begin{tikzcd}
	\cU
	\arrow[r, "\gamma_*", swap]
	\arrow[d, ""]
	\arrow[rr, bend left, "\tilde{\gamma}"]
	&
	\gamma_* \cU
	\arrow[r, "c_\gamma", swap]
	\arrow[d, ""]
	& \cU
	\arrow[d]
	\\
	X
	\arrow[r, "\gamma"]
	&
	X
	\arrow[r, "\id"]
	& X
\end{tikzcd}
\end{equation}
We set $\tilde{\gamma}$ to be the composition of the natural map $\cU\to \gamma_* \cU$ and $c_{\gamma}$.

Note that for two automorphisms $\gamma_1,\gamma_2$ with product $\gamma_3:=\gamma_1\gamma_2$ we have that $\tilde{\gamma}_3 \cdot \tilde{\gamma}_2^{-1}\tilde{\gamma}_1^{-1}$ is an automorphism of $\cU$ that acts trivially on the base $X$, therefore it acts by an element of $\bbT_X$.

\begin{theorem}[Reduction to unitary part]
	\label{thm:reduction_to_unitary_part}
	Let $\bbT_X^c\subset \bbT_X$ denote the product of unit circles, i.e. the compact subtorus.
	Suppose that $\Gamma$ is a subgroup of $\Aut(X)$, preserving a probability measure $\mu$ on $X$.
	
	Then, it is possible to choose the isomorphisms $c_{\gamma}$ in \autoref{eqn_cd:action_and_isomorphism} such that $\forall \gamma_1,\gamma_2\in \Gamma$ with $\gamma_3:=\gamma_1\gamma_2$ we have that
	\[
		\tilde{\gamma}_3 \cdot \tilde{\gamma}_2^{-1}\tilde{\gamma}_1^{-1} \in \bbT_X^c.
	\]
\end{theorem}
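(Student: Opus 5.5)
The strategy is to use the universal affine bundle $E$ and the $\Gamma$-action on it from \autoref{thm:functoriality_of_e} to pin down the ``radial'' part of each lift. We then choose $c_\gamma$ so that this radial part is respected exactly, which leaves only unitary ambiguity. Concretely, $\bbT_X(\bC)\cong (\bC^\times)^\rho = \bbT_X^c\times \bR^\rho$ via $(\lambda_i)\mapsto ((\lambda_i/|\lambda_i|),(\log|\lambda_i|))$. Here $\bR^\rho\cong \Hom(N_\bZ,\bR)$, which is identified with a quotient of $V=H_{1,1}(X;\bR)$: it is the quotient by the annihilator of $N_{\bR}\subset H^{1,1}$. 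So it suffices to construct a $\Gamma$-equivariant map $\cU\to E/N_\bR^\perp$ that intertwines the $\bbT_X$-action, through $\log|\cdot|$, with the translation action of $V/N_\bR^\perp$.

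\textbf{Step 1: a radial classifying map.} Fix smooth hermitian metrics $\phi_i$ on the $L_i$. By the universality property in \autoref{thm:properties_of_universal_bundle} we get classifying maps $cl_{\phi_i}\colon L_i^\times\to E/c_1(L_i)^\perp$. Since $c_1(L_1),\dots,c_1(L_\rho)$ form a basis of $N_\bR$, the fibered product of these maps yields a map
\[
	cl\colon \cU\to E/N_\bR^\perp .
\]
Indeed, $E/N_\bR^\perp$ is the fiber product over $X$ of the $E/c_1(L_i)^\perp$, because $V/N_\bR^\perp\cong \bigoplus_i V/c_1(L_i)^\perp$ by duality. By the relation $\ip{c_1(L),cl_\phi(\lambda z)-cl_\phi(z)}=\log|\lambda|$, the map $cl$ is $\bbT_X$-equivariant, with $\bbT_X$ acting on the target through $\log|\cdot|\colon \bbT_X\to \bR^\rho\cong V/N_\bR^\perp$. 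It is also fiberwise an isomorphism of $\bbT_X/\bbT_X^c$-torsors, so $cl$ realizes $\cU/\bbT_X^c\cong E/N_\bR^\perp$.

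\textbf{Step 2: choosing $c_\gamma$.} Since $\mu$ is $\Gamma$-invariant, \autoref{thm:functoriality_of_e} and \autoref{rmk:on_naturality} provide a left action of $\Gamma$ on $E$. This action is affine over $\gamma_*$ on $V$, and $\gamma_*$ preserves $N_\bR^\perp$ because $\gamma^*$ preserves $N_\bR$. Hence $\Gamma$ acts on $E/N_\bR^\perp$ as an honest group action. Start from any $c_\gamma$, giving a lift $\tilde\gamma$. The two maps $cl\circ\tilde\gamma$ and $\gamma_E\circ cl$ from $\cU$ to $E/N_\bR^\perp$ cover the same map $\gamma$ on $X$, and both are equivariant for the twisted $\bbT_X$-action through $\gamma_*$. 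Their difference is therefore a continuous function $X\to V/N_\bR^\perp$. The main obstacle is to show that this difference is constant. I would prove this by pulling back the canonical potentials: for each $\alpha=\omega_{\phi_i}$ use $cl^*\Phi(\omega_{\phi_i})=\phi_i$ and $\Phi(\gamma^*\alpha)=\gamma^*\Phi(\alpha)$. This expresses the $i$-th component of the difference as $\gamma^*\phi$ minus the metric obtained by transporting through $\tilde\gamma$, which is a difference of two metrics on the same holomorphic line bundle. Using the classifying-map relation $\ip{c_1(L),cl_\psi-cl_\phi}=\int_X(\psi-\phi)\,\mathrm d\mu$, such a difference is a global constant translation. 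Hence the difference is a constant $t_\gamma\in V/N_\bR^\perp\cong\bR^\rho$. We then compose $c_\gamma$ with the element $\exp(-t_\gamma)\in \bbT_X$ acting on the positive real part. This makes $cl\circ\tilde\gamma=\gamma_E\circ cl$ exactly.

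\textbf{Step 3: conclusion.} With these choices, put $g:=\tilde\gamma_3\tilde\gamma_2^{-1}\tilde\gamma_1^{-1}\in\bbT_X$. Then
\[
	cl\circ g=(\gamma_3)_E(\gamma_2)_E^{-1}(\gamma_1)_E^{-1}\circ cl=cl,
\]
since the $\Gamma$-action on $E$ is a genuine action. By $\bbT_X$-equivariance of $cl$, this says $\log|g|=0$, that is, $g\in\bbT_X^c$.
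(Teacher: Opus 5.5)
Your proposal is correct and follows essentially the same route as the paper. The paper likewise maps $\cU$ to $E_M=E/K$, with $K=N_{\bR}^{\perp}$, via the classifying map $cl_\phi$ of the metrics $\phi_i$. It then uses uniqueness of classifying maps and the translation formula for a change of metric (applied to $\phi'=(c_\gamma)_*(\gamma_*\phi)$) to see that the square commutes up to a constant translation in $M_{\bR}$, corrects $c_\gamma$ by an element of $\bbT_X(\bC)$, and concludes that the cocycle fixes the fibers of $cl_\phi$, which are $\bbT_X^c$-orbits. Your Step 2 simply spells out the constancy of the discrepancy, and the $\log|\cdot|$-equivariance used at the end, more explicitly than the paper does.
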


\begin{proof}
	We will make use of the affine bundle $E\to X$ constructed in \autoref{thm:properties_of_universal_bundle}, using the measure $\mu$.

	Consider the maps dual to the inclusion $N_\bR\into H^{1,1}$, and the associated kernel:
	\[
		M_{\bR}:=N_{\bR}^{\dual}\twoheadleftarrow H_{1,1}\hookleftarrow K
	\]
	Define next the quotient
	\[
		E_{M}:=E/K \text{ which is an affine bundle over }M_{\bR}.
	\]
	In the construction of the universal torsor $\cU$ in \autoref{sssec:a_torsor}, equip each $L_i$ with a smooth metric $\phi_i$.
	Denote by $\phi$ the function on $\cU$ obtained from the metrics $\phi_i$ on $L_i$.
	By the universality of the affine bundle $E$ for metrized holomorphic line bundles provided by \autoref{thm:properties_of_universal_bundle}, we have a classifying map
	\[
		cl_{\phi}\colon \cU \to E_M=E/K
	\]
	whose fibers are compact $\rho$-dimensional tori.
	By applying $\gamma_*$ we obtain maps
	\[
		cl_{\gamma_*\phi}\colon \gamma_* \cU \to \gamma_* E_M
	\]
	The arbitrary initial isomorphism $c_{\gamma}$ between $\gamma_* \cU$ and $\cU$ need not commute with the natural map $\gamma_*E_N\to E_N$ provided by \autoref{thm:functoriality_of_e}.
	However, because the classifying map $cl_{\phi}$ is unique, and for another metric such as $\phi':=(c_\gamma)_*(\gamma_*\phi)$, the maps $cl_\phi$ and $cl_{\phi'}$ differ by a translation by an element of $M_{\bR}$, we see that the diagram \autoref{eqn_cd:archimedean_torsor_compatibility} commutes up to a translation by an element of $M_{\bR}$.
	We can then adjust $c_\gamma$ by an element of $\bbT_X(\bC)$ to ensure that the diagram commutes on the nose:
	\begin{equation}
	\label{eqn_cd:archimedean_torsor_compatibility}
		\begin{tikzcd}
			\cU
			\arrow[r, ""]
			\arrow[d, "cl_{\phi}"]
			&
			\gamma_* \cU
			\arrow[r, "c_\gamma"]
			\arrow[d, "cl_{\gamma_*\phi}"]
			& \cU
			\arrow[d, "cl_\phi"]
			\\
			E_M
			\arrow[r, ""]
			&
			\gamma_* E_M
			\arrow[r, ""]
			& E_M
		\end{tikzcd}
	\end{equation}
	It follows that $\tilde{\gamma}_3 \cdot \tilde{\gamma}_2^{-1}\tilde{\gamma}_1^{-1}$ acts on $\cU$ by an automorphism that fixes the fibers of the map $cl_{\phi}$ to $E_M$, and so is an element of the compact subgroup $\bbT_X^c$ if $\bbT_X(\bC)$.
\end{proof}

	\subsection{Examples with obstructions to lifts}
		\label{ssec:Examples-with-obstructions-to-lifts}
		\label{ssec:no_finite_ground_field_lift}
		\label{ssec:calabi_yau_example_without_lifted_action}
		\label{ssec:no_direct_calabi_yau_lift}

We now give two examples showing that the lifting statement for universal torsors cannot be strengthened in two natural ways.
Both examples use the same elementary construction of Calabi--Yau manifolds.
One obstruction is concerned with the need to extend the ground field, while the other is a bit more subtle.

\subsubsection{A construction of Calabi--Yau manifolds}
	\label{sssec:A-construction-of-Calabi-Yau-manifolds}

Let $G$ be a finite group acting on $\bP^1_{\bQ}$.
Assume that $\cO_{\bP^1}(2)$ is $G$-linearized over $\bQ$, and that the induced invariant subsystem of $|\cO_{\bP^1}(4)|$ contains a basepoint-free subsystem defined over $\bQ$.
Fix $n\geq 2$, and set
\[
	Y:=\bP^1\times \bP^{n-1},
	\qquad
	M:=\cO_Y(2,n),
	\qquad
	L_Y:=\cO_Y(1,0),
\]
where $G$ acts on the first factor and trivially on the second.
Then $M$ is $G$-linearized, and the invariant subsystem of
$|M^{\otimes 2}|=|\cO_Y(4,2n)|$
obtained by tensoring with $H^0(\bP^{n-1},\cO(2n))$ is basepoint-free.
By Bertini over the infinite field $\bQ$, choose a smooth $G$-invariant divisor
$D\in |M^{\otimes 2}|$
defined over $\bQ$, and let
\[
	\pi\colon X\to Y
\]
be the double cover branched along $D$, constructed from the square root $M$ of $\cO_Y(D)$.
Since $M$ is $G$-linearized and $D$ is $G$-invariant, the $G$-action on $Y$ lifts to a $G$-action on $X$ over $\bQ$.

The variety $X$ is a smooth projective Calabi--Yau $n$-fold with $\Pic^0(X)=0$.
Indeed,
$K_X=\pi^*(K_Y+M)=\cO_X$
because $K_Y=\cO_Y(-2,-n)$, and
\[
	\pi_*\cO_X=\cO_Y\oplus M^{-1}
\]
gives
\[
	H^i(X,\cO_X)=H^i(Y,\cO_Y)\oplus H^i(Y,\cO_Y(-2,-n)).
\]
The first summand vanishes for $i>0$.
For the second, the Kunneth formula gives
\[
	H^i(Y,\cO_Y(-2,-n))
	\simeq
	\bigoplus_{p+q=i}
	H^p(\bP^1,\cO_{\bP^1}(-2))
	\otimes
	H^q(\bP^{n-1},\cO_{\bP^{n-1}}(-n)).
\]
Here $H^p(\bP^1,\cO(-2))$ is nonzero only for $p=1$, and
$H^q(\bP^{n-1},\cO(-n))$ is nonzero only for $q=n-1$.

\subsubsection{Obstructions to lifts}
	\label{sssec:Obstructions-to-lifts}
For the line bundle
$L:=\pi^*L_Y$
over $X$ we have
\[
	H^0(X,L)=H^0(Y,L_Y)=H^0(\bP^1,\cO_{\bP^1}(1)),
\]
since $H^0(Y,L_Y\otimes M^{-1})=0$.

We shall use this last line as the bridge from linearization obstructions on $\bP^1$ to lifting obstructions for universal torsors.
If a group, or a finite extension of it, acts on a universal torsor of $X$ lifting the action on $X$, then the associated $\bbG_m$-torsor corresponding to the character $[L]\in\Pic(X)$ is equivariant, if the character $[L]$ is $G$-invariant.
Equivalently, $L$ acquires the corresponding linearization.
The induced action on $H^0(X,L)$ is therefore a genuine linear lift of the original projective action on $H^0(\bP^1,\cO_{\bP^1}(1))$.

\begin{proposition}[No finite ground-field lift in general]
	\label{prop:no_finite_ground_field_lift}
	\leavevmode
	\begin{enumerate}
		\item
		There exists a smooth projective Calabi--Yau variety $X$ over $\bQ$ with $\Pic^0(X)=0$ and an action of $G=\bZ/2\bZ$
		on $X$ by $\bQ$-automorphisms such that no finite extension
		\[
			1\to F\to \wtilde{G}\to G\to 1
			\]
			acts on a universal torsor of $X$ by $\bQ$-automorphisms lifting the given $G$-action on $X$.
		\item
		There exists a smooth projective Calabi--Yau variety $X$ over $\bC$ with $\Pic^0(X)=0$ and an action of $G=(\bZ/2\bZ)^2$
		such that the $G$-action on $X$ does not lift to any universal torsor of $X$.
	\end{enumerate}
\end{proposition}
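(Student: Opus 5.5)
The plan is to feed the construction of \autoref{sssec:A-construction-of-Calabi-Yau-manifolds} with a finite group $G\subset\PGL_2$ acting on $\bP^1$ for which $\cO_{\bP^1}(1)$ admits no linearization of the required kind. The obstruction then transfers to $X$ through the bridge of \autoref{sssec:Obstructions-to-lifts}. Concretely, suppose a group $\wtilde G$ (either $G$ itself or a finite extension of it) acts on a universal torsor, lifting the $G$-action on $X$. The class $[L]=[\pi^*\cO_Y(1,0)]$ is $G$-invariant, because $G$ acts on $Y$ through the first factor. So $\wtilde G$ linearizes $L$, and hence acts linearly on $H^0(X,L)=H^0(\bP^1,\cO_{\bP^1}(1))$, lifting the projective action of $G$ on this $2$-dimensional space. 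It therefore suffices to choose $G$ so that no such linear lift exists, and to check the hypotheses of the construction.

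\textbf{Hypotheses of the construction.} For both parts, $\cO_{\bP^1}(2)$ is canonically $\PGL_2$-linearized: $\PGL_2$ acts on $\mathrm{Sym}^2(k^2)^\dual\otimes\det$, and scalars act trivially there. This is the identification $\PGL_2\cong\mathrm{SO}(3)$, and it is defined over $\bQ$. For the basepoint-free invariant subsystem of $|\cO(4)|$ I will exhibit two invariant quartics with no common zero. The construction of \autoref{sssec:A-construction-of-Calabi-Yau-manifolds} then produces a Calabi--Yau $X$ with $\Pic^0(X)=0$ and a $G$-action. I will also check the small point that the invariant quartics are invariant for the chosen linearization, possibly up to a character; a character can be absorbed by taking squares, or the choice can be checked directly.

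\textbf{Part (1).} Take $G=\bZ/2$ generated by $z\mapsto 2/z$, that is, the class of $A=\begin{pmatrix}0&2\\1&0\end{pmatrix}$, with $A^2=2I$. The quartics $x^2y^2$ and $x^4+4y^4$ are invariant under $(x,y)\mapsto(2y,x)$ and have no common zero. Suppose a finite extension $\wtilde G$ acted over $\bQ$. Then the element over the generator would act on $H^0=\bQ^2$ by some $\lambda A$ with $\lambda\in\bQ^\times$, and it would have finite order. Its square $2\lambda^2 I$ must then be a rational root of unity, so $2\lambda^2=\pm1$, which is impossible in $\bQ$.

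\textbf{Part (2).} Take the Klein group $G=(\bZ/2)^2\subset\PGL_2(\bC)$ generated by $z\mapsto -z$ and $z\mapsto 1/z$. The quartics $x^4+y^4$ and $x^2y^2$ are invariant and have no common zero. A lift of the action of $G$ itself would give a homomorphism $G\to\GL_2(\bC)$ lifting $G\to\PGL_2(\bC)$. But any lifts $\lambda\begin{pmatrix}1&0\\0&-1\end{pmatrix}$ and $\mu\begin{pmatrix}0&1\\1&0\end{pmatrix}$ anticommute, so they cannot generate an image of the abelian group $G$. This is the familiar fact that the preimage of the Klein group in $\SL_2$ is the quaternion group.

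\textbf{Main obstacle.} The step needing the most care is the bridge itself: making precise that an action on a universal torsor lifting $G$ induces an honest linearization of $L$. This requires identifying $\cU\times_{\bbT_X}\bbG_m$ (via the character $[L]$) with $L^\times$, and using that $G$ fixes $[L]$, so that the $\wtilde G$-action descends to this associated $\bbG_m$-torsor. Over $\bQ$, one must also use that the universal torsor and the action are defined over $\bQ$, so that the induced action on $H^0(X,L)$ is $\bQ$-linear.
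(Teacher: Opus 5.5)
Your proposal is correct and follows the paper's proof essentially step by step. It uses the same double-cover construction, the same groups and matrices ($A^2=2I$ forcing $2\lambda^2=\pm1$ over $\bQ$, and anticommuting lifts for the Klein group), the same basepoint-free invariant quartics, and the same bridge through $H^0(X,L)=H^0(\bP^1,\cO_{\bP^1}(1))$. The only differences are cosmetic: you get the linearization of $\cO_{\bP^1}(2)$ canonically (via $\Sym^2\otimes\det$, i.e.\ the tangent bundle) rather than from the explicit operator $\tfrac12\Sym^2(A)$, and you get finite order of the lift from finiteness of $\wtilde{G}$ rather than from $\tilde a^2\in F$.
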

\begin{proof}
	For part (i), let $G=\bZ/2\bZ=\langle a\rangle$ act on $\bP^1_{\bQ}$ by
	\[
		a[u:v]=[2v:u].
	\]
	On $H^0(\bP^1,\cO_{\bP^1}(1))$ the corresponding linear map is represented, up to a scalar in $\bQ^\times$, by
	\[
		A=
		\begin{pmatrix}
			0&2\\
			1&0
		\end{pmatrix},
		\qquad A^2=2I.
	\]
	The line bundle $\cO_{\bP^1}(1)$ admits no finite $G$-extension linearization over $\bQ$.
	Indeed, if $\tilde a$ is any lift of $a$, then its action on $H^0(\bP^1,\cO(1))$ is represented by $qA$ for some $q\in\bQ^\times$.
	Since $\tilde a^2$ lies in the finite kernel, it acts on $\cO(1)$ by a root of unity in $\bQ^\times$, hence by $\pm 1$.
	But $(qA)^2=2q^2I$, and $2q^2=\pm1$ has no solution with $q\in\bQ^\times$.

	On the other hand, $\cO_{\bP^1}(2)$ is $G$-linearized over $\bQ$: the operator $\frac{1}{2}\Sym^2(A)$ squares to the identity.
	The induced invariant sections of $\cO_{\bP^1}(4)$ include
	\[
		u^4+4v^4,\qquad u^3v+2uv^3,\qquad u^2v^2,
	\]
	which have no common zero.
	The construction in \autoref{sssec:A-construction-of-Calabi-Yau-manifolds} therefore produces $X$ and the stated $G$-action.
	If a finite extension $\wtilde{G}$ acted on a universal torsor of $X$ over $\bQ$, then $L$ would admit a finite $G$-extension linearization over $\bQ$.
	The identification of $H^0(X,L)$ in \autoref{sssec:Obstructions-to-lifts} would give such a linearization of $\cO_{\bP^1}(1)$, a contradiction.

	For part (ii), let
	$G=(\bZ/2\bZ)^2$
	act on $\bP^1_{\bC}$ by
	\[
		a[u:v]=[u:-v],
		\qquad
		b[u:v]=[v:u].
	\]
	The standard lifts to $\GL_2$ are
	\[
		A=
		\begin{pmatrix}
			1&0\\
			0&-1
		\end{pmatrix},
		\qquad
		B=
		\begin{pmatrix}
			0&1\\
			1&0
		\end{pmatrix},
	\]
	and they satisfy $AB=-BA$.
	Since any other lifts differ from these by scalars, the action of $G$ on $\bP^1$ does not lift to a genuine linear representation on $H^0(\bP^1,\cO_{\bP^1}(1))$.
	Equivalently, $\cO_{\bP^1}(1)$ is not $G$-linearizable.

	The obstruction is killed after squaring, so $\cO_{\bP^1}(2)$ is $G$-linearized.
	The invariant subsystem of $H^0(\bP^1,\cO(4))$ is spanned by $u^4+v^4$ and $u^2v^2$, hence is basepoint-free.
	The common construction gives a smooth projective Calabi--Yau $X$ over $\bQ$ with $\Pic^0(X)=0$ and a $G$-action.
	If this action lifted to the universal torsor of $X$, then $L$ would be $G$-linearized, and the identification of $H^0(X,L)$ in \autoref{sssec:Obstructions-to-lifts}would give a genuine linear lift of the original projective action on $\bP^1$.
	This contradicts $AB=-BA$.
\end{proof}

\subsubsection{Projective linear virtual lifts}
	\label{ssec:projective_linear_virtual_lifts}
We conclude with a final observation: finitely generated subgroups $\Gamma$ of $\PGL_n(\bC)$ lift to $\GL_n(\bC)$, in fact to $\SL_n(\bC)$, after passing to a finite-index subgroup.
Indeed, we can always lift $\Gamma$ by a finite central extension by roots of unity $\mu_n$ to obtain $\wtilde{\Gamma}\subset \SL_n(\bC)$.
Since $\wtilde{\Gamma}$ is a subgroup of $\SL_n(\bC)$, it is residual finite and hence a finite index subgroup $\Gamma_0$ intersects $\mu_n$ trivially, hence projects injectively to $\Gamma$ and remains of finite index.

One can choose the subgroup $\Gamma$ with the property that it does not preserve a probability measure on $\bP^{n-1}(\bC)$, and neither does any finite-index subgroup of it.
For this, it suffices to choose a Zariski-dense subgroup of $\PGL_n(\bC)$ (viewed as a real algebraic group), e.g. a free group generated by two generic elements.

\subsubsection{An example without Lebesgue-class invariant measure}
	\label{sssec:An-example-without-Lebesgue-class-invariant-measure}

For a group $\Gamma$ generated by a single automorphism, it is of course possible to lift the action to a universal torsor.
We record, however, an example that does not have an invariant measure which is absolutely continuous with respect to Lebesgue.

Specifically, Bedford--Kim \cite{BedfordKim2009_Linear-fractional-recurrences} construct rational surface automorphisms $f$ with positive topological entropy that preserve a cuspidal anticanonical curve, and that moreover are regular on a blowup of $\bP^2$ at finitely many points, denoted $S$ (see \cite[Thm.~2.1]{BedfordKim2009_Linear-fractional-recurrences}).
What suffices for us is that there exists a meromorphic $2$-form $\eta$ on $S$ with a simple pole along the anticanonical curve, and that $f^*\eta=\lambda\eta$ for some $\lambda\in\bC^\times$ with $|\lambda|\neq 1$.
Denote by $\dVol_\eta:=\eta\wedge\overline{\eta}$ the corresponding volume form, of infinite mass.
Suppose, by contradiction, that there exists an $f$-invariant probability measure $\mu$ on $S$ which is absolutely continuous with respect to Lebesgue, hence we can write $\mu = e^\rho \dVol_\eta$ for some Lebesgue-measurable function $\rho$.
The equation $f^*\mu=\mu$ gives $f^*\rho  + 2\log|\lambda| = \rho$, and since $\log|\lambda|\neq 0$ it follows that $\rho$ is strictly monotonic along the orbits of $f$.
This is impossible by Poincar\'e recurrence.







\bibliographystyle{sfilip_bibstyle}
\bibliography{universal_torsor}

\end{document}